\newcommand{\norm}[1]{\left\Vert#1\right\Vert}
\newcommand{\R}{\mathbb{R}}
\newcommand{\re}{\mathbb{R}}
\newcommand{\se}{\mathbb{S}}
\newcommand{\Tau}{\mathcal{T}}
\newcommand\reallywidehat[1]{\arraycolsep=0pt\relax%
\begin{array}{c}
\stretchto{
  \scaleto{
    \scalerel*[\widthof{\ensuremath{#1}}]{\kern-.5pt\bigwedge\kern-.5pt}
    {\rule[-\textheight/2]{1ex}{\textheight}} 
  }{\textheight} %
}{0.5ex}\\           
#1\\                 
\rule{-1ex}{0ex}
\end{array}
}
\newcommand{\cN}{\mathcal{N}}
\newcommand{\cP}{\mathcal{P}}
\newcommand{\cPn}{\mathcal{NSP}}
\newcommand{\cQ}{\mathcal{Q}}
\newcommand{\vep}{\varepsilon}
\DeclareMathOperator{\dist}{dist}
\DeclareMathOperator{\dvie}{div}
\DeclareMathOperator{\dive}{div}
\newtheorem{thm}{Theorem}[section]
\newtheorem{lem}[thm]{Lemma}
\theoremstyle{definition}
\newtheorem{rem}[thm]{Remark}
\numberwithin{equation}{section}
\author[A. Biswas]{Animesh Biswas}
\address{Department of Mathematics \\
University of Nebraska-Lincoln \\
210 Avery Hall, Lincoln\\
NE 68588, United States of America}
\email{abiswas2@unl.edu}
\author[M. Foss]{Mikil Foss}
\email{mikil.foss@unl.edu}
\author[P. Radu]{Petronela Radu}
\email{pradu@unl.edu}
\thanks{Research partially supported by,  }
\begin{document}

\title[Nonlocal Curvature]{Nonlocal Mean Curvature with Integrable Kernel}
\maketitle

\begin{abstract}
We study the prescribed constant mean curvature problem in the nonlocal setting where the nonlocal curvature has been defined as
$$
H^J_{\Omega}(x):=\int_{\mathbb{R} ^n} J(x-y)(\chi_{\Omega^c}(y)-\chi_{\Omega}(y))dy,
$$
where $x \in \mathbb{R}^n$, $\Omega \subset \mathbb{R}^n$, $\chi$ is the characteristic function for a set, $J$ is a radially symmetric, nonegative, nonincreasing convolution kernel. Several papers have studied the case of nonlocal curvature with nonintegrable singularity, a generalization of the classical curvature concept, which requires the regularity of the boundary to be above $C^2$.  Nonlocal curvature of this form appears in many different applications, such as image processing, curvature driven motion, deformations. In this work, we focus on the problem of constant nonlocal curvature defined via integrable kernel. Our results offer some extensions to the constant mean curvature problem for nonintegrable kernels, where counterparts to Alexandrov’s theorem in the nonlocal framework were established independently by two separate groups: Ciraolo, Figalli, Maggi, Novaga, and respectively, Cabr\'e, Fall, Sol\`a-Morales, Weth. Using the nonlocal Alexandrov's theorem we identify surfaces of constant mean nonlocal curvature for different integrable kernels as unions of balls situated at distance $\delta$ apart, where $\delta$ measures the radius of nonlocal interactions. 
\end{abstract}

\providecommand{\keywords}[1]
{
  \small	
  {\textit{Keywords:}} #1
}

\keywords{Nonlocal mean curvature, constant curvature, Alexandrov's moving plane method, integrable kernel of interaction, finite horizon.}

\providecommand{\subjclass}[1]
{
  \small	
  {\textit{MSC2010:}} #1
}

\subjclass{53A10, 45XX, }
\section{Introduction}
In this paper, we study the constant nonlocal mean curvature problem, i.e. find the surfaces for which the curvature is constant. After the seminal paper~\cite{Caffa-Roq-Savin} which introduced the concept of nonlocal curvature, there has been increasing interest in problems involving this new concept. Given $\Omega\subseteq\re^n$ with a sufficiently smooth boundary $\partial\Omega$ and $0<s<1$, the authors Caffarelli, Roquejofre, and Savin of~\cite{Caffa-Roq-Savin} introduced
\begin{align}\label{eq:nlc_caffa}
    H_\Omega^s(x) = \int_{\re^n}\frac{\chi_{\Omega^c}(y)-\chi_{\Omega}(y)}{|x-y|^{n+2s}} \, dy
\end{align}
as a nonlocal mean curvature of $\Omega$ at $x \in \partial \Omega$. Here, we use $\chi_E$ to denote the characteristic function of $E\subseteq\re^n$. The kernel of interaction that appears in the definition of $H^s_\Omega$ was chosen to be the strongly singular function $J(x-y)$, with $J(z)=|z|^{-n-2s}$ for $z\in\re^n\setminus\{0\}$, and $0<s<1$ (observe that $J\notin L^1(\re^n)$). In~\cite{curvaturepaper}, Maz\'{o}n, Rossi, and Toledo extended the concept of nonlocal curvature to include integrable kernels, a case which is of interest to us as well. In particular, our integrability assumption has a prototypical example given by 
\begin{equation}\label{eq:exam_kernel}
   z\mapsto\frac{\chi_{B_{r}}(z)}{|z|^{n-\alpha}},
\end{equation}
for some $\alpha>0$ and $r>0$. Here $B_{r}\subseteq\re^n$ is the open ball with radius $r$ centered at the origin. More generally, we assume the integrable kernel $J\in L^1(\re^n)$ satisfies the following
\begin{itemize}
    \item (\textbf{rotational symmetry}) there exists $\mu:\re\to\re$ such that $J(z)=\mu(|z|)$ for all $z\neq0$,
    \item (\textbf{compact support}) there exists $r>0$ such that $\mu(\rho)=0$ for all $\rho\ge r$,
    \item (\textbf{radially decreasing}) if $0<r_2<r_1\le r$, then $\mu(r_1)<\mu(r_2)$.
\end{itemize}
Given a measurable set $\Omega \subseteq \re^n$, the nonlocal curvature at $x \in \re^n$, is defined as
\begin{equation}\label{curvat}
    H^J_{\Omega}(x)
    :=\int_{\re ^n} J(x-y)(\chi_{\Omega}(y) - \chi_{\Omega^c}(y))
    =\int_{\re ^n} J(x-y)\tau_\Omega(y)dy.
\end{equation}
For convenience, we will use $\tau_\Omega = \chi_{\Omega}- \chi_{\Omega^c}$. Since $J$ is integrable and $\tau_\Omega\in L^\infty(\re^n)$, we see that $H_\Omega^J$ is well-defined and finite on all of $\re^n$ without any regularity assumptions on $\partial\Omega$.

\subsection{Motivation} In \cite{Caffa-Roq-Savin, Osher}, it was shown that the classical mean curvature problem appears in the cellular automata problem in the following way. Fix $\vep>0$, and set $t_0=0$ and $\Omega_0=\Omega$. For each $k\in\cN$, put $t_k=k\vep$. We iteratively define $\Omega_k=\{x\in\re^n: u_k(t_k,x)\ge1/2\}$, where $u_k:[t_{k-1},t_k]\times\re^n\to\re$ is the solution to the heat equation $\partial_t u_k - \Delta u_k = 0$ with initial condition $u_k(t_{k-1},\cdot) = \chi_{\Omega_{k-1}}$. The sequence of surfaces $\{\partial \Omega_k\}_{k=0}^\infty$ provide a discrete approximation to the flow of $\Omega$ by mean curvature. A natural candidate for the flow by nonlocal mean curvature uses the fractional heat equation $u_t-(-\Delta)^s u=0$, where $0<s<1$ is fixed. Here the fractional Laplacian operator, $(-\Delta)^s$ is defined by
$$(-\Delta)^s u = \mathcal{F}^{-1}\left( |\xi|^{2s} \mathcal{F}u \right), $$
where $\mathcal{F}$ is the Fourier transform and $\mathcal{F}^{-1}$ is its inverse. The fractional Laplacian operator and in general fractional elliptic and parabolic operators have been very well-studied, see for example \cite{BDS, B-S, Caffa-Silv, Caffarelli-Silvestre-Salsa, Stinga-Caffa}.

In 1951 \cite{Caccio1}, Caccioppoli gave the following definition of perimeter. The perimeter of $\Omega$ inside a measurable set $E\subseteq\re^n$ is defined as
$$Per(\Omega, E) = \sup \bigg\{ \int_E \chi_\Omega \dive \phi \, dx: \phi \in C^\infty_0(E, \R^n), |\phi| \leq 1 \bigg\}, $$
a formulation that is also known as the total variation of the function $\chi_\Omega$. If $\partial\Omega$ is $C^2$, it is easy to show that $Per(\Omega, \R^n) = \mathcal{H}^{n-1}(\partial \Omega)$. In \cite{Caffa-Roq-Savin}, the authors defined the nonlocal energy functional as, when $s<1/2$, 
$$\norm{u}^2_{H^s} = \int_{\R^n} \int_{\R^n} \frac{(u(x) - u(y))^2}{|x-y|^{n+2s}} dx dy.$$
 In this paper the authors showed that minimizing this energy functional of a characteristic function $\chi_\Omega$ (which can be interpreted as a nonlocal perimeter of $\Omega$), inside a bounded set $E$ under given nonlocal boundary condition, one can obtain the minimal surfaces $S = \partial \Omega$ whose Euler-Lagrange equation is 
$$\int_{\R^n} (\chi_\Omega(y) - \chi_{\Omega^c}(y))|x-y|^{-n-2s} dy= 0, \quad \text{for}~x \in S.  $$
The above-mentioned quantity, 
$$-\int_{\R^n} (\chi_\Omega(y) - \chi_{\Omega^c}(y))|x-y|^{-n-2s} dy $$
is labeled as the nonlocal curvature. Introduction of the nonlocal minimal surfaces and mean curvature motivated a rich body of literature in the last ten years. In 2018, two different groups, \cite{Cabre-Fall-Sola-Weth,Figalli} studied the constant nonlocal mean curvature problem, as defined above. Both groups used the idea of the Alexandrov's moving plane in the nonlocal setting and proved that a ball is the only solution to the constant nonlocal mean curvature problem. Similar type of constant nonlocal mean curvature problems in different settings were studied in \cite{Cabre-Fall-Weth, Cabre-Fall-Weth1}. Additionally, we mention some works on nonlocal minimal graphs, \cite{Cabre-Cozzi} and nonlocal curvature flows \cite{Valdinocc1}. For other results concerning this particular type of nonlocal curvature, see \cite{Valdinocc2, Valdinocc3}.

The motivation for the introduction and study of the concepts of perimeter and curvature in the nonlocal framework with integrable kernel is born from similar considerations as for other models, mainly, {\it to eliminate restrictions on the smoothness of the geometry (boundary) of the domain}. The example from image processing \cite{cinti2019quantitative,curvaturepaper} provides a simple illustration for considering new geometrical measures for domains in the nonlocal setting.

 For boundaries that are smooth the nonlocal curvature was shown to provide a good approximation to the classical curvature \cite{curvaturepaper}. However, note that the nonlocal curvature has the advantage of being well-defined at corners or other singular points of the boundary. 
 Moreover, the definition makes sense for all points of a set, not only on a boundary, a feature which may be useful for irregular domains (such as cusps) as one may be able to capture the rate of ``narrowing" as the cusp, or other singular point, is approached. An important application of the nonlocal curvature can be found for domains with ``zig-zag" boundaries where the classical curvature could be defined only piecewise on segments; however, as the segments get smaller (and possibly approach a smooth boundary), there is no measure for how curved or bent the boundary is. 
 
Motivated by the significance of this version of nonlocal curvature, we are interested in identifying surfaces with a prescribed nonlocal curvature, where the nonlocal curvature is defined as in \eqref{curvat}. In other words, for given $f$ we are interested in the solution $u$ of the equation 
\begin{align}\label{eq:prescribed_curv}
    \int_{\re ^n} J(x-y)(\chi_{\Omega^c}(y)-\chi_{\Omega}(y))dy = f(x), \quad \text{where}~ x \in \partial \Omega .
\end{align}
Above, $\Omega$ is a bounded set, and we will further assume that $f$ is a constant function. On the other hand, when $f$ is nonconstant we assume that $\partial \Omega$ is the graph of a function. Although nonlocal curvature can be defined for surfaces with no differentiability, for our results we will have a standing assumption that the boundary is at least $C^1$. Indeed, this is required by the main technique used for the constant curvature problem, the Alexandrov's moving plane method. Regarding this regularity assumption, we mention that very recently, the authors in \cite{Bucur1} were able to consider, for a particular type of kernel, boundaries that are not differentiable. More specifically, they considered the following problem. Let $\Omega$ a bounded set  which satisfies some non-degeneracy conditions. For any point $x \in \partial^* \Omega$, where $\partial^* \Omega$ is the essential boundary of $\Omega$, if
\begin{equation}\label{cond1}
    |B_r(x) \cap \Omega|=c,
\end{equation}
where $B_r(x)$ is the ball of radius $r$ centered at $x$, and $c>0$ and $r>0$ are fixed, then the domain $\Omega$ must be a union of balls. Note that \eqref{cond1} translates to constant mean curvature when the kernel is given by the identity function for $B_r(x)$. Although the authors in \cite{Bucur1} got the desired solution in a more general setup, our methods provide an alternative approach which may be valuable in handling different problems involving nonlocal curvature.

\subsection{Main contributions and significance}

Our main interest in this paper is to study the constant nonlocal mean curvature problem. For our first result, we will consider the case of an infinite radius of interaction, i.e. $r= \infty$. In addition, assume that the kernel of interaction $J$ is differentiable everywhere, except possibly at $x=0$, and strictly decreasing. The proof of the following theorem is very similar to the proofs given for the case of fractional Laplacian kernel given in \cite{Cabre-Fall-Sola-Weth, Figalli} with the exception that $J$ is integrable. Since $J$ is integrable near $x=0$, we assume some decay rate for $J$ and $\nabla J$ near $x=0$; specifically, we assume 
\begin{equation}\label{eq:J estimate}
\begin{cases}
|J(x)| \leq \min \left\{ \frac{C}{|x|^{n-\alpha}}, \frac{C}{|x|^{n+\alpha_1}} \right\} \\
|\nabla J(x) | \leq \min \left\{ \frac{C}{|x|^{n+1 -\alpha}}, \frac{C}{|x|^{n+1 +\alpha_1}} \right\},
\end{cases}
\end{equation}
for all $x \neq 0$ and for some $\alpha, \alpha_1 >0$.

\begin{thm}\label{thm:c2beta_bdd}
Let $J:\re^n \to \re$ be radially symmetric, strictly decreasing nonnegative function which is $C^\infty(\re^n \setminus \{0 \})$ and satisfies the decay estimates \eqref{eq:J estimate}. In addition, let $J(x) = \phi(|x|)$ for some function $\phi: [0, \infty) \to \re_+$ with $\phi'(t) <0$ for all $t>0$.
Let $\Omega$ be a nonempty bounded open set with  $C^{1, \beta}$  boundary for some $\beta = \min \{\alpha -1, 0 \} \geq 0$ and with the property that $H^J_\Omega$ is constant at every point on the boundary. Then $\Omega$ is a ball.
\end{thm}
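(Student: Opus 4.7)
The plan is to implement the Alexandrov moving plane method in this integrable nonlocal setting, adapting the arguments of \cite{Cabre-Fall-Sola-Weth, Figalli} for the fractional kernel. Integrability of $J$ makes every occurrence of $H^J_\Omega$ an absolutely convergent integral, and the role played by the singularity in the fractional case is taken over by the strict monotonicity of the radial profile $\phi$. Fix a unit direction $e\in\S^{n-1}$, and for $\lambda\in\re$ set $T_\lambda := \{x\cdot e = \lambda\}$, $H_\lambda := \{x\cdot e > \lambda\}$, $\Sigma_\lambda := \Omega\cap H_\lambda$, and let $R_\lambda$ denote reflection across $T_\lambda$. Define
\begin{equation*}
\lambda^* := \inf\set{\lambda\in\re : R_\mu(\Sigma_\mu)\subseteq \Omega\text{ for all }\mu\ge\lambda},
\end{equation*}
which is finite because $\Omega$ is bounded. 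Using the $C^{1,\beta}$ regularity of $\partial\Omega$, one shows in the standard way that at $\lambda = \lambda^*$ either (a) there is an interior tangency point $p \in \partial R_{\lambda^*}(\Sigma_{\lambda^*})\cap\partial\Omega$ with $p\cdot e < \lambda^*$, whence $\bar p := R_{\lambda^*}(p)\in\partial\Omega$ as well, or (b) $T_{\lambda^*}$ is tangent to $\partial\Omega$ at some boundary point.

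In case (a) the constancy of $H^J_\Omega$ gives $H^J_\Omega(p) = H^J_\Omega(\bar p)$. Changing variables $y\mapsto R_{\lambda^*}(y)$ in $H^J_\Omega(\bar p)$ and using the rotational symmetry $J(R_{\lambda^*}z - R_{\lambda^*}w) = J(z-w)$, and then folding the integral over $H_{\lambda^*}$ into $H_{\lambda^*}^c$ via the $R_{\lambda^*}$-oddness of $\chi_\Omega - \chi_{R_{\lambda^*}(\Omega)}$, yields
\begin{equation*}
0 = H^J_\Omega(p) - H^J_\Omega(\bar p) = 2\int_{H_{\lambda^*}^c}\bigl[J(p-y) - J(p - R_{\lambda^*}(y))\bigr]\bigl[\chi_\Omega(y) - \chi_{R_{\lambda^*}(\Omega)}(y)\bigr]\,dy.
\end{equation*}
By the definition of $\lambda^*$, the second factor is nonnegative on $H_{\lambda^*}^c$. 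A short computation shows that $|p-y|<|p-R_{\lambda^*}(y)|$ whenever $p,y\in H_{\lambda^*}^c$ and $y\notin T_{\lambda^*}$; the strict monotonicity of $\phi$ then forces $J(p-y)>J(p-R_{\lambda^*}(y))$ there. Since the nonnegative integrand integrates to zero, we must have $\chi_\Omega = \chi_{R_{\lambda^*}(\Omega)}$ a.e.\ on $H_{\lambda^*}^c$, i.e., $\Omega$ is symmetric about $T_{\lambda^*}$.

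The main obstacle is case (b): now $p = \bar p$ and the display above is trivially zero, so I would instead take nearby boundary points $q\in\partial\Omega$ with $q\cdot e<\lambda^*$, $q\to p$, and expand the difference $H^J_\Omega(q) - H^J_\Omega(R_{\lambda^*}(q))$ to leading order in $q-p$ to extract a nonlocal Hopf-type inequality from the integral identity, ruling out this case. This is exactly where the decay bounds \eqref{eq:J estimate} on $J$ and $\nabla J$ enter: they justify differentiation under the integral and control the contribution of a neighborhood of $p$, while the $C^{1,\beta}$ regularity describes $\partial\Omega$ near $p$ as a graph with controlled error. Running this argument for every direction $e\in\S^{n-1}$ produces a hyperplane of symmetry of $\Omega$ in every direction; the elementary fact that a bounded open set with this property is a ball then completes the proof.
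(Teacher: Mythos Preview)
Your treatment of case (a) is correct and matches the paper's argument essentially line for line. The issue is case (b), where your sketch is both imprecise and slightly misconceived.

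First, a conceptual slip: case (b) is not ``ruled out''; it occurs (for a ball it occurs in every direction) and must also yield the symmetry $\Omega = R_{\lambda^*}(\Omega)$. Second, your proposed mechanism has a gap. You take $q\in\partial\Omega$ near $p$ and look at $H^J_\Omega(q)-H^J_\Omega(R_{\lambda^*}(q))$, hoping to expand to leading order. But while $H^J_\Omega(q)=c$ by hypothesis, the point $R_{\lambda^*}(q)$ is generally \emph{not} on $\partial\Omega$, so you cannot invoke constancy there; nor do you know a priori that $H^J_\Omega$ is differentiable (or even Lipschitz) at such off-boundary points when $\alpha\le 1$, since $|\nabla J|\sim |x|^{-(n+1-\alpha)}$ is then nonintegrable near the origin. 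So the difference quotient does not obviously tend to anything you can identify with a derivative of the curvature along $\partial\Omega$.

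The paper's argument in case (b) avoids this by working entirely on the boundary. Since $e\in T_p(\partial\Omega)$ and $H^J_\Omega$ is constant on $\partial\Omega$, the \emph{tangential} derivative $\partial_e H^J_\Omega(p)=0$; the same holds for $H^J_{R(\Omega)}$ on $\partial R(\Omega)$, and $e\in T_p(\partial R(\Omega))$ as well, so $\partial_e H^J_{R(\Omega)}(p)=0$. The substantive step is a lemma (the analogue of \cite[Lemma~2.1]{Figalli}) showing $H^J_\Omega\in C^1(\partial\Omega)$ and that $\partial_e H^J_\Omega(p)=\lim_{\varepsilon\to0}\int\tau_\Omega(y)\,\phi_\varepsilon'(|p-y|)\frac{(p-y)\cdot e}{|p-y|}\,dy$ for a suitable smooth approximation $\phi_\varepsilon\to\phi$; this is exactly where \eqref{eq:J estimate} and the $C^{1,\beta}$ regularity enter. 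Subtracting the two vanishing tangential derivatives and using $\phi_\varepsilon'\le 0$ together with the sign of $(p-y)\cdot e$ on $\Omega\setminus R(\Omega)$ and $R(\Omega)\setminus\Omega$ forces $|\Omega\triangle R(\Omega)|=0$. This is not really a Hopf-type normal-derivative argument; it is a tangential-derivative identity combined with a pointwise sign.
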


Nest we consider the case when the kernel $J$ satisfies all the previous conditions except it has compact support, precisely inside the ball $B_r$. As $J$ is differentiable everywhere except possibly at $x=0$, $J=0$ on $\partial B_r$. Also, as $J=0$ in $(B_r)^c$, we don't need to consider the decay of $J$ near infinity. In this case, we assume  
\begin{equation}\label{eq:J estimate_2}
\begin{cases}
|J(x)| \leq  \frac{C}{|x|^{n-\alpha}} \\
|\nabla J(x) | \leq \frac{C}{|x|^{n+1 -\alpha}}
\end{cases}
\end{equation}
for all $x \neq 0$ for some $\alpha >0$.

Under these assumptions we obtain the following theorem:
\begin{thm}\label{thm:c2beta_bdd_2}
Let $J:\re^n \to \re$ be radially symmetric, strictly decreasing nonnegative function which is $C^\infty(\re^n \setminus \{0 \})$  and satisfies the estimates in \eqref{eq:J estimate_2}. In addition to that if $J(x) = \phi(|x|)$ for some function $\phi: [0, \infty) \to \re_+$ then $\phi'(t) <0$ for all $t>0$ and $J$ has a compact support in $B_r$.
Let $\Omega$ be nonempty bounded, open, set with  $C^{1, \beta}$  boundary for some $\beta = \min \{\alpha -1, 0 \} \geq 0$ and with the property that $H^J_\Omega$ is constant at every point on the boundary then $\Omega$ is a {union of balls which are at a distance at least $r$ from each other.}
\end{thm}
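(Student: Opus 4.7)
The plan is to adapt the nonlocal Alexandrov moving plane method used in the proof of Theorem~\ref{thm:c2beta_bdd}, tracking carefully how the compact support of $J$ alters the conclusion. Two new features are in play: first, the reflection identity at a critical hyperplane now yields only \emph{local} symmetry of $\Omega$ (inside a ball of radius $r$ around the touching point) rather than the global symmetry available in the unbounded-horizon setting; second, the finite horizon decouples connected components of $\Omega$ at mutual distance at least $r$, in the sense that for such a component $\Omega_i$ and $x\in\partial\Omega_i$ one has $H^J_\Omega(x)=H^J_{\Omega_i}(x)$, so each well-separated component carries constant curvature in isolation.

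Fix a direction $e\in\S^{n-1}$ and set $\pi_\lambda=\{x\cdot e=\lambda\}$, $\Omega^\lambda=\Omega\cap\{x\cdot e>\lambda\}$, with $R_\lambda$ denoting the reflection across $\pi_\lambda$. Decreasing $\lambda$ from $+\infty$, the $C^{1,\beta}$ regularity of $\partial\Omega$ together with the estimates in \eqref{eq:J estimate_2} yield, exactly as in Theorem~\ref{thm:c2beta_bdd}, the inclusion $R_\lambda(\Omega^\lambda)\subseteq\Omega$ down to a first critical value $\lambda_0$. Picking a touching point $x_0\in\partial\Omega\cap\{x\cdot e<\lambda_0\}$ with $x_0^*:=R_{\lambda_0}(x_0)\in\partial\Omega$ and performing the usual change of variable on the half-space $\{y\cdot e>\lambda_0\}$, the equality $H^J_\Omega(x_0)=H^J_\Omega(x_0^*)$ rearranges to
\begin{equation*}
\int_{\{y\cdot e<\lambda_0\}}\bigl[J(x_0-y)-J(x_0^*-y)\bigr]\bigl[\tau_\Omega(y)-\tau_\Omega(R_{\lambda_0}y)\bigr]\,dy=0.
\end{equation*}
Radial monotonicity of $J$ combined with $R_{\lambda_0}(\Omega^{\lambda_0})\subseteq\Omega$ makes both factors pointwise nonnegative, so the integrand vanishes a.e. Since the first factor also vanishes whenever $|x_0-y|\geq r$, the conclusion is only the local symmetry $\tau_\Omega(y)=\tau_\Omega(R_{\lambda_0}y)$ for a.e.\ $y\in B_r(x_0)\cap\{y\cdot e<\lambda_0\}$.

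To upgrade this to structural information, I would argue that the extremal connected component $\Omega_\star\subseteq\Omega$ in direction $e$ is globally symmetric about $\pi_{\lambda_0}$: by choosing $x_0$ to be a boundary point of $\Omega_\star$ and invoking the finite horizon, the reflected copy $R_{\lambda_0}(\Omega_\star)$ is confined to the $r$-neighborhood in which local symmetry applies, so the local symmetry forces $R_{\lambda_0}(\Omega_\star)=\Omega_\star$. Running over all $e\in\S^{n-1}$, $\Omega_\star$ is symmetric about a hyperplane in every direction, hence a ball of some radius $\rho=\rho(c)$ determined implicitly by $H^J_{B_\rho}=c$. Peeling off $\Omega_\star$ and iterating shows every component of $\Omega$ is a ball of the common radius $\rho$. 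For the separation condition: if two such balls $B_i,B_j\subseteq\Omega$ were at distance $d<r$, radial monotonicity of $J$ would give a strictly larger contribution of $B_j$ to $H^J_\Omega$ at the point of $\partial B_i$ closest to $B_j$ than at the diametrically opposite point, contradicting the constancy of $H^J_\Omega$. The principal obstacle I anticipate is the local-to-global upgrade in this last paragraph: unlike the infinite-support case of Theorem~\ref{thm:c2beta_bdd}, one must exploit both the extremality of the critical plane and the finite horizon $r$ to trap the extremal component inside $B_r(x_0)$, and may need to handle separately configurations where the touching involves distinct components.
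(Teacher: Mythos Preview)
Your outline correctly identifies the two new phenomena introduced by the compact support of $J$, and the computation leading to local symmetry inside $B_r(x_0)$ is fine. The genuine gap is the local-to-global upgrade in your final paragraph. You write that ``the reflected copy $R_{\lambda_0}(\Omega_\star)$ is confined to the $r$-neighborhood in which local symmetry applies,'' but there is no reason whatsoever for the extremal component $\Omega_\star$ (or its reflection) to sit inside $B_r(x_0)$: the horizon $r$ constrains the support of $J$, not the diameter of the components of $\Omega$. A component can be arbitrarily large relative to $r$, and then the local symmetry $\tau_\Omega=\tau_{R(\Omega)}$ on $B_r(x_0)$ tells you nothing about $\Omega_\star$ away from $x_0$. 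You flag this as the ``principal obstacle,'' and indeed your proposed resolution does not work.

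The paper closes this gap by a \emph{propagation/covering} argument rather than a trapping argument. Once $\Omega=R(\Omega)$ inside $B_r(x_0)$, every point $z_0\in\partial\Omega_-\cap B_r(x_0)$ lies on $\partial\Omega\cap\partial R(\Omega)$ and is therefore itself an interior touching point for the same critical hyperplane $\pi_{\lambda_0}$. Applying the local symmetry identity at $z_0$ gives $\Omega=R(\Omega)$ inside $B_r(z_0)$; one then picks $z_1\in\partial\Omega_-\cap B_r(z_0)$ with $|z_1-z_0|\ge r/2$, and so on. Compactness of $\partial\Omega$ guarantees the iteration terminates, yielding $\Omega=R(\Omega)$ throughout the component $\mathcal N_{x_0}$ containing $x_0$. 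The paper further propagates this symmetry to any other component within distance $r$ of $\mathcal N_{x_0}$ (its ``influence'' mechanism), and only components at distance $\ge r$ from the resulting symmetric cluster remain; for those one continues to slide $\pi_\lambda$ in the \emph{same} direction $e$ to a second critical position, and repeats. Your ``peel off $\Omega_\star$ and iterate'' shortcut is also problematic as written: to apply the argument to $\Omega\setminus\Omega_\star$ you would need $H^J_{\Omega\setminus\Omega_\star}$ constant on its boundary, which requires $\dist(\Omega_\star,\Omega\setminus\Omega_\star)\ge r$---precisely what you establish only at the very end. The paper avoids this circularity by keeping $\Omega$ intact and moving the plane further instead. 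Finally, you treat only the interior touching case; the non-transversal case needs the $C^1$ differentiability of $H^J_\Omega$ along $\partial\Omega$ (the analogue of Lemma~\ref{lem:Figalli 2.1} with compactly supported $\phi_\varepsilon$) to obtain the tangential-derivative identity, and then the same local conclusion $|(\Omega\Delta R(\Omega))\cap B_r(x_0)|=0$ follows.
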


Our next theorem is for some $J$ which has compact support in $B_r$. $J$ is radially symmetric, strictly decreasing function which is in $C^\infty(B_r \setminus \{0\} )$. Inside the ball $B_r$, $J$ satisfies the estimate given in \ref{eq:J estimate_2}. But $J$ has a jump at the boundary $\partial B_r$. In addition, if $J(x) = \phi(|x|)$ for some $\phi: [0, \infty) \to \re_+$, then $\phi'(t) < 0$ in $(0, r)$. As an example, one can consider the kernel given in \eqref{eq:exam_kernel}. Next we impose the following condition on the boundary of the set $\Omega$. We assume the $C^1$-norm of the boundary is uniformly bounded and small so that $\partial \Omega$ does not vary wildly inside $B_{r+\delta}(x)$ for any $x \in \partial \Omega$ and for some $\delta >0$.  Precisely we assume that for every $x \in \partial \Omega$, $\partial \Omega$ is a graph of some $C^1$ function $f^x$ inside the ball $B_{r+\delta}(x)$ (for some $\delta>0$) such that $\norm{\nabla f^x} \leq M <1$.

\begin{thm}\label{thm:c2beta_bdd_3}
Let $J:\re^n \to \re$ be radially symmetric, strictly decreasing nonnegative function which is $C^\infty(B_r \setminus \{0\} )$  and satisfies the estimates in \eqref{eq:J estimate_2}. In addition to that if $J(x) = \phi(|x|)$ for some function $\phi: [0, \infty) \to \re_+$ then $\phi'(t) <0$ in $(0, r)$ and $\phi$ has a jump at $r$ from a positive value to $0$.
Let $\Omega$ be nonempty bounded open connected set with  $C^{1, \beta}$  boundary for some $\beta = \min \{\alpha -1, 0 \} \geq 0$. In addition to that we assume that the boundary has uniformly bounded $C^1$-norm as mentioned above. Next if $H^J_\Omega$ is constant at every point on the boundary then $\Omega$ is a {union of balls which are at a distance at least $r$ from each other.}
\end{thm}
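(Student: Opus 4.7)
The plan is to adapt the Alexandrov moving plane argument used for Theorem~\ref{thm:c2beta_bdd_2}, with the main new technical issue being the jump discontinuity of $J$ at $\partial B_r$. Since $\Omega$ is connected, the stated conclusion ``union of balls at distance at least $r$ from each other'' collapses to ``$\Omega$ is a single ball,'' so the aim is to produce, in every direction $e\in\S^{n-1}$, a hyperplane of symmetry for $\Omega$. Fix such an $e$, and set $T_\lambda=\{x:x\cdot e=\lambda\}$, $\Sigma^\pm_\lambda=\{\pm(x\cdot e-\lambda)>0\}$, $x^\lambda=x-2(x\cdot e-\lambda)e$, and $\Omega^\lambda=(\Omega\cap\Sigma^+_\lambda)^\lambda\subseteq\Sigma^-_\lambda$. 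Starting with $\lambda$ so large that $\Omega^\lambda=\emptyset$ and decreasing, let
$$\lambda^*=\inf\{\lambda:\Omega^\mu\subseteq\Omega\text{ for all }\mu\geq\lambda\}.$$
By boundedness of $\Omega$ and the $C^{1,\beta}$ regularity of $\partial\Omega$, there is a contact point $x_0\in\partial\Omega\cap\partial\Omega^{\lambda^*}$, either an interior tangency in $\Sigma^-_{\lambda^*}$ or a tangency of $\partial\Omega$ with $T_{\lambda^*}$.

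The central identity, obtained by the substitution $y\mapsto y^{\lambda^*}$ in one of the integrals, reads
$$H^J_\Omega(x_0)-H^J_\Omega(x_0^{\lambda^*})=\int_{\Sigma^+_{\lambda^*}}\bigl[J(x_0-y)-J(x_0-y^{\lambda^*})\bigr]\bigl[\tau_\Omega(y)-\tau_\Omega(y^{\lambda^*})\bigr]\,dy.$$
For $y\in\Sigma^+_{\lambda^*}$ and $x_0\in\Sigma^-_{\lambda^*}$, $|x_0-y^{\lambda^*}|<|x_0-y|$, so radial monotonicity of $J$ forces the first bracket to be $\leq 0$; the inclusion $\Omega^{\lambda^*}\subseteq\Omega$ forces the second bracket to be $\leq 0$ on $\Sigma^+_{\lambda^*}$. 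Hence the integrand is $\geq 0$. Both $x_0,x_0^{\lambda^*}\in\partial\Omega$ and $H^J_\Omega$ is constant on $\partial\Omega$, so the left side vanishes and the integrand is zero a.e.~By strict monotonicity of $\phi$ on $(0,r)$ and the positive jump at $r$, the first bracket is strictly negative whenever $|x_0-y^{\lambda^*}|<r$; therefore $\tau_\Omega(y)=\tau_\Omega(y^{\lambda^*})$ a.e.~on $\{y:y^{\lambda^*}\in B_r(x_0)\}$, giving local symmetry of $\Omega$ with respect to $T_{\lambda^*}$ inside $B_r(x_0)$.

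Two issues then remain. If the contact at $\lambda^*$ is tangential rather than interior, $x_0^{\lambda^*}=x_0$ and the above identity is trivial; I instead consider $x_\varepsilon\in\partial\Omega\cap\Sigma^-_{\lambda^*}$ near $x_0$, compare $H^J_\Omega(x_\varepsilon)$ with $H^J_\Omega(x_\varepsilon^{\lambda^*})$, and pass to the limit $\varepsilon\to 0^+$. The jump of $J$ at $|z|=r$ would ordinarily generate a surface contribution as $\partial B_r(x_\varepsilon)$ sweeps through $\partial\Omega$. Here the assumption that $\partial\Omega$ is locally the graph of a $C^1$ function $f^x$ with $\norm{\nabla f^x}\leq M<1$ on $B_{r+\delta}(x)$ is essential: it forces transversality between $\partial\Omega$ and $\partial B_r(x_\varepsilon)$, so their intersection is an $(n-2)$-dimensional Lipschitz manifold depending continuously on $\varepsilon$, and the jump contribution is controlled by, not dominant over, the strictly monotone bulk term from $\phi$ on $(0,r)$. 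Second, to upgrade local symmetry in $B_r(x_0)$ to global symmetry, I use connectedness of $\partial\Omega$: the set of boundary points at which $\Omega$ is locally symmetric about $T_{\lambda^*}$ is open (by reapplying the moving-plane argument at each such point) and closed (by continuity in the $C^{1,\beta}$ topology), hence all of $\partial\Omega$. Applying this in every direction $e\in\S^{n-1}$, $\Omega$ is symmetric through a common point in every direction, hence a ball.

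The main obstacle is precisely the interaction between the tangential contact case and the jump of $J$. The $C^1$-graph assumption with $M<1$ is introduced to tame the jump-induced surface contribution in this regime, ensuring transversal intersection of $\partial\Omega$ with $\partial B_r(x)$ so that the standard strict-monotonicity comparison still produces the necessary contradiction.
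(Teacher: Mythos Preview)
Your interior-touching analysis and the open/closed propagation along the connected boundary are fine and match the paper's reasoning. The gap is in the non-transversal (tangential) case.

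You propose to pick $x_\varepsilon\in\partial\Omega\cap\Sigma^-_{\lambda^*}$ close to $x_0$ and use the identity for $H^J_\Omega(x_\varepsilon)-H^J_\Omega(x_\varepsilon^{\lambda^*})$. But at the critical position the inclusion $\Omega^{\lambda^*}\subseteq\Omega$ only says that reflections of points in $\Omega\cap\Sigma^+_{\lambda^*}$ land in $\Omega$; it gives no information about the reflection of a boundary point in $\Sigma^-_{\lambda^*}$. In general $x_\varepsilon^{\lambda^*}\notin\partial\Omega$, so the constancy hypothesis does \emph{not} force the left side to vanish, and letting $\varepsilon\to0$ only recovers continuity of $H^J_\Omega$ at $x_0$, which is useless for rigidity. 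Your heuristic that the jump contribution is ``controlled by, not dominant over'' the bulk term is also not the right picture: there is no competition to win.

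The paper handles this case by first proving, via an approximation $J_\varepsilon(x)=\phi_\varepsilon(|x|)g_\varepsilon(|x|)$ with $g_\varepsilon$ smoothing $\chi_{[0,r]}$, that $H^J_\Omega\in C^1(\partial\Omega)$ (this is exactly where the uniform $C^1$-graph bound $\|\nabla f^x\|\le M<1$ on $B_{r+\delta}(x)$ is used, to make the function $\lambda\mapsto\int_{\Omega\cap\partial B_\lambda(x)}(e\cdot e_y)\,d\sigma$ continuous). One then differentiates along $e\in T_{x_0}(\partial\Omega)$: the tangential derivative splits as a bulk integral involving $\phi'_\varepsilon g_\varepsilon$ plus a surface term $-\phi(r)\int_{\partial B_r(x_0)}\tau_\Omega(y)(e_y\cdot e)\,d\sigma$ produced by the jump. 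Subtracting the analogous expression for $R(\Omega)$ and using $(x_0)_1=\lambda^*$, \emph{both} pieces have a definite sign, so the vanishing of $\partial_e H^J_\Omega(x_0)-\partial_e H^J_{R(\Omega)}(x_0)$ yields simultaneously
\[
|(\Omega\setminus R(\Omega))\cap B_r(x_0)|=0
\quad\text{and}\quad
\mathcal H^{n-1}\bigl((\Omega\setminus R(\Omega))\cap\partial B_r(x_0)\bigr)=0.
\]
The surface term is not a nuisance to be dominated; it cooperates with the bulk term. Replacing your limiting device by this $C^1$-differentiability lemma and the explicit derivative formula closes the gap.
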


In all the theorems given above $J$ is strictly decreasing in its support set. Now we present a result when $J$ is not necessarily strictly decreasing in its support set. In particular, we  assume that $J(x) = \chi_{B_r}$. But we don't have a sphere solution for all $r$. To see that, consider any set with diameter $< r/2$. Then using the definition of nonlocal curvature at some point $x_1 \in \partial \Omega$ is
$$H^J_\Omega(x_1) = |B_r(x_1) \cap \Omega^c| - |B_r(x_1) \cap \Omega| = |B_r| - 2|B_r(x_1) \cap \Omega)| = |B_r| - 2|\Omega|.$$
The above expression is independent of $x_1$ and hence there is no unique solution. As mentioned before, in \cite{Bucur1}, the authors studied this problem when the boundary of the set is not smooth. They assumed a non-degeneracy condition on the set, which says that if $\Omega$ is $r$-degenerate then
$$\inf_{x_1, x_2 \in \partial \Omega^*} \frac{|\Omega \cap (B_r(x_1) \Delta B_r(x_2))|}{\norm{x_1-x_2}} =0.$$
Our aim is to present a different proof of the similar result under uniform $C^1$-norm boundary condition stated before the Theorem \ref{thm:c2beta_bdd_3}. 
\begin{thm}\label{thm:thm:c2beta_bdd_4}
Let $J(x) = \chi_{B_r}(x)$. 
Let $\Omega$ be nonempty connected bounded open set which has  $C^{1}$  boundary with similar condition as in Theorem \ref{thm:c2beta_bdd_3} . It also satisfies the constant mean curvature conditions, that is $H^J_\Omega$ to be constant at every point on the boundary. Then $\Omega$ is a ball.
\end{thm}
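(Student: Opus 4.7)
The strategy is to adapt the Alexandrov moving plane method used in Theorems \ref{thm:c2beta_bdd}--\ref{thm:c2beta_bdd_3} to the indicator kernel $J=\chi_{B_{r}}$, compensating for the loss of strict monotonicity by exploiting the uniform $C^{1}$ small-slope hypothesis and the connectedness of $\Omega$. Fix a direction $e\in\S^{n-1}$ and introduce $T_{\lambda}=\{x\cdot e=\lambda\}$, $\Sigma_{\lambda}=\{x\cdot e<\lambda\}$, $H_{\lambda}=\{x\cdot e>\lambda\}$, reflection $R_{\lambda}$, and $\Omega_{\lambda}=R_{\lambda}(\Omega\cap H_{\lambda})$. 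Decreasing $\lambda$ from large values, let $\lambda^{*}$ be the critical value at which $\Omega_{\lambda^{*}}\subseteq\Omega\cap\Sigma_{\lambda^{*}}$ holds but fails for every $\lambda<\lambda^{*}$; standard arguments produce either (a) an interior touching point $x_{0}\in\partial\Omega\cap\Sigma_{\lambda^{*}}$ with $R_{\lambda^{*}}(x_{0})\in\partial\Omega$, or (b) a boundary touching $x_{0}\in\partial\Omega\cap T_{\lambda^{*}}$ where the outward normal to $\partial\Omega$ is perpendicular to $e$. The goal is to show $\Omega$ is symmetric about $T_{\lambda^{*}}$ for every such direction, which for a bounded connected open set forces $\Omega$ to be a ball.

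Writing $V(x):=|B_{r}(x)\cap\Omega|$, so that $H^{J}_{\Omega}(x)=|B_{r}|-2V(x)$ and the constant curvature hypothesis reads $V\equiv c'$ on $\partial\Omega$, a direct computation using $R_{\lambda^{*}}(\Omega)\cap\Sigma_{\lambda^{*}}\subseteq\Omega$ and a change of variables yields the identity
\[
V(x)-V(R_{\lambda^{*}}(x))=|A\cap L(x)|\ge 0\qquad\text{for every }x\in\Sigma_{\lambda^{*}},
\]
where $A:=(\Omega\setminus R_{\lambda^{*}}(\Omega))\cap\Sigma_{\lambda^{*}}$ and $L(x):=B_{r}(x)\setminus B_{r}(R_{\lambda^{*}}(x))\subseteq\Sigma_{\lambda^{*}}$. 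In case (a), applying this at $x=x_{0}$ with $V(x_{0})=V(R_{\lambda^{*}}(x_{0}))=c'$ forces $|A\cap L(x_{0})|=0$. When $\dist(x_{0},T_{\lambda^{*}})>r/2$, a geometric check shows $L(x_{0})$ contains an open Euclidean ball around $x_{0}$, so $A$ is null in that ball and $\Omega$ coincides with $R_{\lambda^{*}}(\Omega)$ in a neighborhood of $x_{0}$.

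The main obstacle is the propagation of this local coincidence to all of $\partial\Omega$, since $J=\chi_{B_{r}}$ does not produce a strict Hopf-type inequality away from the touching point. The plan is to show that the set
\[
S:=\{x\in\partial\Omega:\Omega=R_{\lambda^{*}}(\Omega)\text{ on some open neighborhood of }x\}
\]
is both open (immediate) and relatively closed in $\partial\Omega$. The uniform $C^{1}$ small-slope hypothesis is essential for closedness: on any ball $B_{r+\delta}(x)$ around a boundary point, both $\partial\Omega$ and $\partial R_{\lambda^{*}}(\Omega)$ admit $C^{1}$ graph representations over the common tangent hyperplane with gradient bounded uniformly by $M<1$, so the difference of the two graph functions is an odd function in the reflected variable which is forced to be nonpositive by the moving plane inclusion; this sign constraint together with the equation $V\equiv c'$ on $\partial\Omega$ pins this odd difference to zero on a neighborhood whose size is uniform in the base point, giving closedness. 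Case (b) is handled by a Hopf-type sliding: since the tangent plane to $\partial\Omega$ at $x_{0}\in T_{\lambda^{*}}$ contains $e$, the uniform small-slope graph representation allows a small perturbation of $x_{0}$ along $\partial\Omega$ into $\Sigma_{\lambda^{*}}$ whose reflection remains on $\partial\Omega$ to leading order, which in the limit recovers the interior-touching constraint $|A\cap L(x)|=0$ and reduces (b) to (a).

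By connectedness of $\partial\Omega$ (inherited from the connectedness of $\Omega$ and its $C^{1}$ regularity), $S=\partial\Omega$ and therefore $\Omega=R_{\lambda^{*}(e)}(\Omega)$. Repeating this for every $e\in\S^{n-1}$, the set $\Omega$ is symmetric with respect to a hyperplane in every direction; together with boundedness and connectedness, this identifies $\Omega$ as a ball centered at the common intersection of the symmetry hyperplanes.
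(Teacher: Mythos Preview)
Your overall architecture (moving planes, the identity $V(x)-V(R_{\lambda^{*}}(x))=|A\cap L(x)|$, open/closed propagation on $\partial\Omega$, then rotation of $e$) is correct and matches the paper. However, two of your key steps are not carried through, and the mechanisms you suggest for them are not the ones that actually work.

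\textbf{Case (b).} Your ``Hopf-type sliding'' does not give the needed conclusion. If you slide $x_{0}$ to $x_{\varepsilon}\in\partial\Omega\cap\Sigma_{\lambda^{*}}$, then $R_{\lambda^{*}}(x_{\varepsilon})$ is \emph{not} on $\partial\Omega$, so $V(R_{\lambda^{*}}(x_{\varepsilon}))$ is not equal to $c'$ and the identity $|A\cap L(x_{\varepsilon})|=0$ does not follow; making this quantitative is exactly computing the tangential derivative $\partial_{e}H^{J}_{\Omega}(x_{0})$. The paper does this explicitly: the lemmas preceding the theorem show, using the uniform small-slope hypothesis, that $H^{J}_{\Omega}\in C^{1}(\partial\Omega)$ with
\[
\partial_{e}H^{J}_{\Omega}(x_{0})=-\int_{\partial B_{r}(x_{0})}\tau_{\Omega}(y)\,\frac{(x_{0}-y)\cdot e}{|x_{0}-y|}\,d\sigma(y).
\]
Setting $\partial_{e}H^{J}_{\Omega}(x_{0})-\partial_{e}H^{J}_{R(\Omega)}(x_{0})=0$ yields $\mathcal{H}^{n-1}\big((\Omega\setminus R(\Omega))\cap\partial B_{r}(x_{0})\big)=0$, and the small-slope hypothesis guarantees $\partial\Omega_{-}\cap\partial B_{r}(x_{0})\neq\varnothing$; any such point is then an interior touching point, reducing (b) to (a). Your sketch bypasses the differentiability lemma, which is the technical heart of this case.

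\textbf{Propagation in Case (a).} Your closedness argument for $S$ (``the sign constraint together with $V\equiv c'$ pins the odd difference to zero on a neighborhood whose size is uniform in the base point'') is asserted, not proved; nothing in the setup gives a uniform lower bound on that neighborhood, and the ``odd function'' description of the graph difference is not accurate away from the plane. The paper uses a different, purely topological device: the symmetric relation
\[
y\in B_{r}(x)\setminus\overline{B_{r}(R(x))}\quad\Longleftrightarrow\quad x\in B_{r}(y)\setminus\overline{B_{r}(R(y))}
\qquad (x,y\in\Sigma_{\lambda^{*}}).
\]
From $|A\cap L(x_{0})|=0$ one gets $\Omega=R(\Omega)$ on the open set $L(x_{0})=B_{r}(x_{0})\setminus\overline{B_{r}(R(x_{0}))}$, hence every $y\in\partial\Omega_{-}\cap L(x_{0})$ is again an interior touching point; by the symmetric relation the same holds with roles reversed, so the set of interior touching points is open in $\partial\Omega_{-}$. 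Crucially, the \emph{same} relation shows that the set of non-contact points is also open: if $x_{nc}$ were non-contact but some $z\in\partial\Omega_{-}\cap L(x_{nc})$ were a contact point, then $x_{nc}\in L(z)$ would force $x_{nc}$ to be a contact point. Connectedness of $\partial\Omega_{-}$ then finishes. Your restriction to $\dist(x_{0},T_{\lambda^{*}})>r/2$ is also unnecessary once you work with $L(x_{0})$ itself rather than a ball around $x_{0}$; the set $L(x_{0})$ is nonempty and open for every $x_{0}\in\Sigma_{\lambda^{*}}$.
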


Before we present the proof of Theorems \ref{thm:c2beta_bdd}, \ref{thm:c2beta_bdd_2}, \ref{thm:c2beta_bdd_3} and \ref{thm:thm:c2beta_bdd_4} let us first describe the idea of the proof. In the case of a highly singular kernel, see \cite{Cabre-Fall-Sola-Weth, Figalli}, where the authors introduce and prove a nonlocal version of the celebrated Alexandrov's Theorem, {\cite{Alexandrov}}. Our proof will follow a similar approach. For simplicity consider the kernel whose support set is $\R^n$. Next we consider a hyperplane, $HP_e$, which is normal to a unit vector $e \in \se^{n-1}$. One can see that $HP_e$ partitions $\Omega$ into two disjoint sets, say $\Omega_1, \Omega_2$ (either of them can be empty). 
 Let us start with a situation when one of them, suppose $\Omega_2$, is empty. Let $R_e$ be reflection operator with respect to $HP_e$ at that position. In that scenario, it is trivially true that $R_e(\Omega_2) \subset \Omega_1$.
 Then as we move $HP_e$ (translation in the $e$ direction), $\Omega_2$ starts to become nonempty still satisfying $R_e(\Omega_2) \subset \Omega_1$. Here we are using same notation for the quantities $\Omega_1, \Omega_2$ and $R_e$ which, in principle, depend also on the position of $HP_e$.
 Next after a particular translation, $R_e(\Omega_2)$ does not remain a subset of $\Omega_1$. Let us name that translation of $HP_e$ as the critical hyperplane $HP^*_e$ and the corresponding reflection operator as $R^*_e$. Next, we prove that $\Omega \Delta R^*_e(\Omega)$ is an empty set, where 
 $$\Omega \Delta R^*_e(\Omega) = (\Omega \setminus R^*_e(\Omega) ) \cup ( R^*_e(\Omega) \setminus \Omega)$$
 That implies $\Omega$ is symmetric with respect to the $e$-direction.
 Proof of the above claim, in case of nonlocal curvature, depends on the strict monotonicity, size conditions and support of $J$ along with the boundary regularity of $\Omega$.  Finally as this claim is true for any $e \in \se^{n-1}$, we must have that $\Omega$ is a ball.

 As mentioned before, our focus is on the study of the constant nonlocal mean curvature problem when the boundary is at least $C^1$. Very recently, the paper \cite{Bucur1} solved the constant mean curvature problem for boundaries with less regularity (even measurable), our work provides a different perspective and approach to this problem. Specifically, for different integrable kernels we investigate the differentiability of the curvature function on the boundary. To this end, regularity properties of the boundary are required. Establishing differentiablity of the curvature function is a critical step in solving the constant mean curvature problem and is an independent problem deserving attention on its own.

 \section{Proof of Theorem \ref{thm:c2beta_bdd}}
Proof of Theorem \ref{thm:c2beta_bdd} is very similar to the proof given in \cite{Cabre-Fall-Sola-Weth, Figalli}. Since the main idea of this proof will be used for the other related theorems, we give a brief outline of the proof for the convenience of the reader. 
As mentioned before, the principal idea of the proof is the Alexandrov's method. We discuss that with details in the proof. Another important step is to prove that $H^J_\Omega \in C^1(\partial \Omega)$. This is in parallel to a result given in \cite[Lemma 2.1]{Figalli} and \cite[Proposition 2.1]{Cabre-Fall-Sola-Weth}. As the proof of this result is very similar except some minor modifications due to the regularity of the boundary, we present the proof of that result in the Appendix. We provide only the statement of that result in this section.

\begin{lem}\label{lem:Figalli 2.1}
Suppose $\phi_\varepsilon \in C^\infty_c([0, \infty))$ is such that $\phi_\varepsilon \geq 0, \phi'_\varepsilon \leq 0$ and it satisfies the following estimates, for all $t>0$
$$
\begin{cases}
\max\{t^{n-\alpha}, t^{n+\alpha_1} \} \phi_\varepsilon(t)  +  \max\{t^{n-\alpha+1}, t^{n+\alpha_1+1} \} |\phi'_\varepsilon(t)| \leq C(n, \alpha, \alpha_1) \\
|\phi'_\varepsilon(t)| \to |\phi'(t)| \quad \text{as}~ \varepsilon \to 0 \\
\phi_\varepsilon(t) = \phi(t) \quad \text{if}~ t> \varepsilon 
\end{cases}
$$
where $\phi(t)$ has been defined in Theorem \ref{thm:c2beta_bdd}.
Then we define 
$$H^\varepsilon_\Omega (x) = \int_{\re^n} \tau_\Omega (y) \phi_\varepsilon(|x-y|) dy,  $$
$x \in \re^n$. Next we assume that $\Omega$ is nonempty bounded open set with  $C^{1, \beta}$  boundary for some $\beta > 0$ such that $\beta + \alpha >1$. Then $H^J_\Omega \in C^1(\partial \Omega)$ and $H^\varepsilon_\Omega \to H^J_\Omega$ in $C^1(\partial \Omega)$ as $\varepsilon \to 0$.
\end{lem}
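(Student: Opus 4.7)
The plan is to first observe that each smoothed kernel $\phi_\varepsilon$ lies in $C_c^\infty([0,\infty))$, so $H^\varepsilon_\Omega \in C^\infty(\re^n)$ by differentiating under the integral sign. The lemma then reduces to establishing (i) $H^\varepsilon_\Omega \to H^J_\Omega$ uniformly on $\partial\Omega$, (ii) a uniform-in-$\varepsilon$ bound on the tangential derivatives $\nabla^{\mathrm{tan}} H^\varepsilon_\Omega$ along $\partial\Omega$, and (iii) a Cauchy-in-$\varepsilon$ property of those tangential derivatives in $C^0(\partial\Omega)$. Claim (i) is routine: the hypothesis $\phi_\varepsilon(t) = \phi(t)$ for $t > \varepsilon$ gives pointwise a.e.\ convergence of the integrands, and the uniform envelope $|\phi_\varepsilon(t)| \le C\min\{t^{\alpha-n},\, t^{-n-\alpha_1}\}$ furnishes an integrable majorant, so dominated convergence applies.

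The heart of the argument is (ii), because
$$
\nabla H^\varepsilon_\Omega(x) = \int_{\re^n}\tau_\Omega(y)\,\phi'_\varepsilon(|x-y|)\,\frac{x-y}{|x-y|}\,dy,
$$
and the raw bound $|\phi'_\varepsilon(t)| \le Ct^{\alpha-1-n}$ alone is not integrable near $y = x$ when $\alpha \le 1$. The required cancellation will come from the $C^{1,\beta}$ geometry of $\partial\Omega$. For each $x_0 \in \partial\Omega$, I will rotate coordinates so that the outward unit normal points along $e_n$ and parametrize $\partial\Omega \cap B_\rho(x_0)$ as a graph $y_n = f(y')$ with $f(0) = 0$, $\nabla f(0) = 0$, and $|f(y')| \le M|y'|^{1+\beta}$. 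Splitting $\nabla H^\varepsilon_\Omega(x_0)$ into contributions from $B_\rho(x_0)$ and its complement, the outer piece is harmless: once $\varepsilon < \rho$, $\phi'_\varepsilon$ coincides with $\phi'$ there, and $\phi'$ is integrable away from the origin by the decay estimates.

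For the inner piece, I will take a tangential direction $T = e_i$ with $i < n$ and decompose
$$
\tau_\Omega(y) = \mathrm{sign}\bigl(f(y') - y_n\bigr) = \mathrm{sign}(-y_n) + \bigl[\mathrm{sign}(f(y') - y_n) - \mathrm{sign}(-y_n)\bigr].
$$
Paired with the kernel $\phi'_\varepsilon(|y|)\,y_i/|y|$, the $\mathrm{sign}(-y_n)$ term is odd in $y_i$ over the symmetric ball $B_\rho(0)$ and vanishes identically. The remainder is supported where $y_n$ lies between $0$ and $f(y')$, a slab of thickness at most $M|y'|^{1+\beta}$. Applying $|\phi'_\varepsilon(|y|)| \le C|y|^{\alpha-1-n}$ and integrating first in $y_n$ bounds the inner contribution by
$$
C\int_{\{|y'|\le\rho\}} \frac{|f(y')|}{|y'|^{n+1-\alpha}}\,dy' \;\le\; CM\int_{\{|y'|\le\rho\}}|y'|^{\alpha+\beta-n}\,dy',
$$
which is finite precisely because $\alpha + \beta > 1$. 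The same manoeuvre with $|\phi'_\varepsilon - \phi'_{\varepsilon'}|$ in place of $|\phi'_\varepsilon|$, combined with the pointwise convergence $|\phi'_\varepsilon| \to |\phi'|$ and dominated convergence, establishes (iii): $\{\nabla^{\mathrm{tan}} H^\varepsilon_\Omega(x_0)\}_\varepsilon$ is Cauchy in $\varepsilon$ at each fixed $x_0$.

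What remains is equicontinuity of $\nabla^{\mathrm{tan}} H^\varepsilon_\Omega$ in the base point $x_0 \in \partial\Omega$, uniformly in $\varepsilon$. This is the step I expect to be the main obstacle, since the graph parametrization itself varies with $x_0$ and the slab decomposition is performed in a rotating frame. I would address it by covering $\partial\Omega$ with finitely many charts (using compactness) to obtain uniform $C^{1,\beta}$ control and a uniform admissible radius $\rho$, then comparing $\nabla^{\mathrm{tan}} H^\varepsilon_\Omega(x_0)$ with $\nabla^{\mathrm{tan}} H^\varepsilon_\Omega(x_1)$ for nearby $x_0, x_1$ by running the same slab estimate in a joint parametrization, so that the modulus of continuity inherits the Hölder exponent $\beta$ independently of $\varepsilon$. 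Combining (i)--(iii) with this equicontinuity, an Arzelà--Ascoli argument identifies the uniform limit of $\nabla^{\mathrm{tan}} H^\varepsilon_\Omega$ with $\nabla^{\mathrm{tan}} H^J_\Omega$, yielding both $H^J_\Omega \in C^1(\partial\Omega)$ and $H^\varepsilon_\Omega \to H^J_\Omega$ in $C^1(\partial\Omega)$.
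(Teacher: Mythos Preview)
Your approach is correct and genuinely different from the paper's. The paper does not work with the volume integral $\int\tau_\Omega(y)\phi'_\varepsilon(|x-y|)\frac{x-y}{|x-y|}\,dy$ directly; instead it introduces the antiderivative $\psi_\varepsilon(t)=-t^{-n}\int_t^\infty\phi_\varepsilon(\tau)\tau^{n-1}\,d\tau$, uses $\dive(x\psi_\varepsilon(|x|))=\phi_\varepsilon(|x|)$ and the divergence theorem to rewrite $H^\varepsilon_\Omega$ as a \emph{surface} integral over $\partial\Omega$, and only then differentiates. The cancellation in the paper comes from the symmetrization $y'\mapsto -y'$ in the graph chart, which produces four terms each controlled by $|\nabla f(y')+\nabla f(-y')|\le C|y'|^\beta$ and similar $C^{1,\beta}$ increments; your cancellation comes instead from subtracting the tangent half-space, so that the principal part $\operatorname{sign}(-y_n)\,\phi'_\varepsilon(|y|)\,y_i/|y|$ is odd in $y_i$ and the remainder lives on a slab of thickness $|f(y')|\le M|y'|^{1+\beta}$. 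Both routes land on the same integrability condition $\alpha+\beta>1$. Your argument is more elementary (no $\psi_\varepsilon$, no divergence theorem), while the paper's boundary-integral formula is more explicit and reusable. One simplification you can make: the Arzel\`a--Ascoli/equicontinuity step is unnecessary. Since each $H^\varepsilon_\Omega\in C^\infty$, it suffices that your slab bound $C\rho^{\alpha+\beta-1}$ and the far-field convergence are uniform in $x_0\in\partial\Omega$, which follows immediately from compactness of $\partial\Omega$ and the uniform $C^{1,\beta}$ constants; uniform Cauchy convergence of continuous functions then gives a continuous limit directly, without a separate equicontinuity argument.
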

Proof of the Lemma is given in the Appendix. Now we can write using Lemma \ref{lem:Figalli 2.1}
$$\nabla H^J_\Omega(x) \cdot e = \lim_{\varepsilon \to 0} \nabla H^\varepsilon_\Omega(x) \cdot e$$
for any $x \in \partial \Omega$ and $e \in T_x(\partial \Omega)$. 
Next we prove Theorem \ref{thm:c2beta_bdd}. But before we do, we want to introduce some notation which will be useful in later sections also. 
For $e \in \se^{n-1}$, $E \subset \re^n$, $\tau \in \re$, we define 
\begin{align*}
    &\pi_\tau = \{ x \in \re^n : x \cdot e = \tau \}, \quad \hbox{plane perpendicular to $e$-vector} \\
    &(\pi_\tau)_+ = \{ x \in \re^n : x \cdot e > \tau \},\\
    &E_\tau = E \cap (\pi_\tau)_+, \\
    &R_\tau(x)  = x- 2(x \cdot e - \tau) e, \\
       &\quad \hbox{where $R_\tau$ is the reflection operator with respect to $\pi_\tau$ plane}, \\
    &R_\tau(E) = \{ R_\tau(x): x \in E \}. 
\end{align*}
Next we define, for a bounded open set $\Omega$, $\mu = \sup \{ x \cdot e : x \in \Omega \}$. If the set $\Omega$ has a $C^1$ boundary, \cite{Figalli}, then for every $\tau < \mu$, sufficiently close to $\mu$, $R_\tau(\Omega_\tau) \subseteq \Omega$. Therefore we define
$$\lambda: = \inf \{\tau \in \re : R_{\tilde{\tau}}(\Omega_{\tilde{\tau}}) \subset \Omega \quad \hbox{for all}~ \tilde{\tau} \in (\tau, \mu) \}. $$

We denote $\pi_\lambda$ to be the critical hyperplane for a fixed direction $e$. 
For our convenience, we use $R$ in place of $R_\lambda$,  in the following discussion, to denote reflection with respect to the critical hyperplane.
Next from \cite{Alexandrov}, we see that for any direction $e$, at least one of the following two conditions will hold,
\begin{enumerate}[(a)]
    \item interior touching: $\partial R(\Omega)$ is tangent to $\partial \Omega$ at some point $x_0$ which is the refection of a point $x'_0 \in \partial R(\Omega) \setminus \pi_\lambda $.
    \item non-transversal intersection: $\pi_\lambda$ is orthogonal to $\partial \Omega$ at some point $x_0 \in \partial \Omega \cap \pi_\lambda$.
\end{enumerate}

\begin{proof}[Proof of Theorem \ref{thm:c2beta_bdd}]
As mentioned before, proof is similar as in \cite{Cabre-Fall-Sola-Weth, Figalli}. Hence we give a brief outline where we mention the important steps only. 
We start by considering any $e \in \se^{n-1}$.  For a fixed $e \in \se^{n-1}$, we will have either case $(a)$ or case (b). Then we want to prove, irrespective of case (a) or (b), that $\Omega \setminus R(\Omega) = R(\Omega) \setminus \Omega = \varnothing$, where $\varnothing$ is the null set. Since this is true for any $e$, $\Omega$ is symmetric in any direction and hence is a ball. 
Assume that, without the loss of generality, $e = \langle 1, 0, 0, \cdots 0 \rangle$. If $\pi_\lambda$ is the critical hyperplane in this direction for some $\lambda$, then the rotation operator with respect to $\pi_\lambda$, is given by $R(x) = (-x_1+2 \lambda , x_2, \cdots, x_n)$ for any $x=(x_1, x_2, \cdots, x_n)$. 

     Case (a): We notice that $x_0, x'_0=R(x_0) \in \partial \Omega \cap \partial R(\Omega)$. Then 
    $$H^J_\Omega(x_0) - H^J_\Omega(R(x_0)) = 0.$$ 
    Using $\tau_\Omega(y) = (\chi_{\Omega^c}(y) - \chi_\Omega(y))$ for any $y \in \re^n$,
    \begin{align*}
        H^J_\Omega (R(x_0)) &= -\int_{\re^n} \tau_\Omega(y) J(R(x_0)-y) dy \\
        &=-\int_{\re^n}\tau_\Omega(y) J(-(x_0)_1+2 \lambda -y_1, (x_0)_2-y_2, \cdots, (x_0)_n-y_n) dy \\
        &= -\int_{\re^n}\tau_\Omega(y) J(-(x_0)_1 +(-y_1+2 \lambda), (x_0)_2-y_2, \cdots, (x_0)_n-y_n) dy \\
        &= -\int_{\re^n}\tau_\Omega(y) J((x_0)_1 -(-y_1+2 \lambda), (x_0)_2-y_2, \cdots, (x_0)_n-y_n) dy \\
             &\qquad \hbox{(using radial symmetry of $J$)}\\
        &= -\int_{\re^n}\tau_\Omega(y) J(x_0-R(y))dy \\
        &= -\int_{\re^n} \tau_{R(\Omega)}(y) J(x_0-y) dy \quad \hbox{(doing a change of variable)} \\
        &= H^J_{R(\Omega)}(x_0)
    \end{align*}
    Next we observe that $\tau_\Omega = -\tau_{R(\Omega)}=1$ in $\Omega \setminus R(\Omega)$ and $\tau_\Omega = -\tau_{R(\Omega)}=-1$ in $R(\Omega) \setminus \Omega$. Then we have, 
    \begin{align}\label{eq: interior}
        0 &= H^J_\Omega(x_0) - H^J_\Omega(R(x_0)) \\ \nonumber
        &= H^J_\Omega(x_0) - H^J_{R(\Omega)}(x_0)\\ \nonumber
        &= \int_{\re^n} \bigg( \tau_{\Omega}(y) - \tau_{R(\Omega)}(y)\bigg)J(x_0 - y) dy\\ \nonumber
        &= \int_{\Omega \setminus R(\Omega)} J(x_0-y) dy - \int_{R(\Omega) \setminus \Omega} J(x_0 -y) dy \\ \nonumber
        &= \int_{\Omega \setminus R(\Omega)} \big(J(x_0-y)-J(x_0 - R(y)) dy
    \end{align}
    From the construction, $x_0 \in \Omega \setminus R(\Omega)$ and hence $|x_0 -y| \leq |x_0 - R(y)|$ for all $y \in \Omega \setminus R(\Omega)$. 
    Since $J$ is a strictly decreasing function, we have for all $y \in \Omega \setminus R(\Omega)$, $ \big(J(x_0-y)-J(x_0 - R(y))$ is a positive quantity. That implies $|\Omega \setminus R(\Omega) | =0$ and similarly $|R(\Omega) \setminus \Omega| =0$.
    
     Case (b): We see that $\pi_\lambda$ is orthogonal to $\partial \Omega$, which implies that the vector $e = \langle 1, 0, \cdots, 0 \rangle$ is tangent to $\partial \Omega$ at the point $x_0$. Similarly, $e$ is tangent to $\partial R(\Omega)$ at $x_0$. Since $H^J_\Omega$ is a constant function on $\partial \Omega$, $H^J_{R(\Omega)}$ is constant on $\partial R(\Omega)$. So the tangential derivative of $H^J_\Omega$, at the point $x_0$ will be zero, i.e $\partial_e H^J_\Omega(x_0) = \nabla H_\Omega(x_0) \cdot e = 0$. Using similar reasons, $\partial_e H^J_{R(\Omega)}(x_0) = 0$. From Lemma \ref{lem:Figalli 2.1}, we know that 
    $\partial_e H^J_{\Omega}(x_0) = \lim_{\varepsilon \to 0} \partial_e H^\varepsilon_{\Omega}(x_0)$.
    As $H^\varepsilon_J(x_0) = \int_{\re^n} \tau_\Omega (y) \phi_\varepsilon(|x_0-y|) dy,$ denoting $J_\varepsilon(x) = \phi_\varepsilon(|x|),$    we can write
    $$ \partial_e H^J_\varepsilon(x_0) = \int_{\re^n} \tau_\Omega (y) \nabla J_\varepsilon(|x_0-y|) \cdot e \, dy = \int_{\re^n} \tau_\Omega (y) \phi'_\varepsilon(|x_0-y|) \frac{x_0-y}{|x_0-y|} \cdot e \, dy $$
    Since $e=\langle 1, 0, \cdots, 0 \rangle$,
    $$\partial_e H^J_\varepsilon(x_0) = \int_{\re^n} \tau_\Omega (y) \phi'_\varepsilon(|x_0-y|) \frac{(x_0)_1-y_1}{|x-y|} \, dy $$
Again using the fact $\partial_e H^J_\Omega(x_0) - \partial_e H^J_{R(\Omega)}(x_0)=0$, we have
    \begin{align*}
        0 &= \lim_{\varepsilon \to 0} \int_{\re^n} \big(\tau_\Omega (y) -\tau_{R(\Omega)}(y)) \phi'_\varepsilon(|x_0-y|) \frac{(x_0)_1-y_1}{|x_0-y|} \, dy \\
        &= \lim_{\varepsilon \to 0} \bigg[ \int_{\Omega \setminus R(\Omega)} \phi'_\varepsilon(|x_0-y|) \frac{(x_0)_1-y_1}{|x_0-y|} \, dy - \int_{R(\Omega) \setminus \Omega} \phi'_\varepsilon(|x_0-y|) \frac{(x_0)_1-y_1}{|x_0-y|} \, dy \bigg]
    \end{align*}
    Next we observe that $(x_0)_1 = \lambda$ and if $y\in \Omega \setminus R(\Omega)$ then $y_1<\lambda$. On the other hand, if $y \in R(\Omega) \setminus \Omega$ then $y_1> \lambda$. Hence
    \begin{equation}\label{eq:non transversal}
        0 = \lim_{\varepsilon \to 0} \bigg[ \int_{\Omega \setminus R(\Omega)} \phi'_\varepsilon(|x_0-y|) \frac{|\lambda-y_1|}{|x_0-y|} \, dy + \int_{R(\Omega) \setminus \Omega} \phi'_\varepsilon(|x_0-y|) \frac{|\lambda - y_1|}{|x_0-y|} \, dy \bigg]
    \end{equation}
    Since $\phi'_\varepsilon(t) \leq 0$ for all $t$ and $ \phi_\varepsilon(t) = \phi(t)$ when $t>\varepsilon$, then $\phi'_\varepsilon(|x_0-y|)<0 $ when $|x_0-y|>\varepsilon$ for any $y \in \re^n$. After defining, 
    $$E^1_{\varepsilon} = \{ y \in \Omega \setminus R(\Omega) : |x_0-y| > \varepsilon \}, E^2_{\varepsilon} = \{ y \in R(\Omega) \setminus \Omega : |x_0-y| > \varepsilon \},  $$
    we see that $|E^1_\varepsilon| = |E^2_\varepsilon|=0$ for every $\varepsilon>0$. That gives us $|\Omega \setminus R(\Omega)| = |R(\Omega) \setminus \Omega| = 0$.
Then we see that, in both cases $|\Omega \setminus R(\Omega)| = |R(\Omega) \setminus \Omega| = 0$. But $\Omega$ has $C^{1,\beta}$ boundary and hence $\Omega = R(\Omega)$. 
\end{proof}

\section{Proof of Theorem \ref{thm:c2beta_bdd_2}}
Since $J$ is supported on a finite set, precisely on $B_r$, the solution is not always a ball. If we impose a condition that $\Omega$ is a connected set, then the solution is a ball otherwise in general it will be a union of balls of same size, where the balls are at least $r$ distance away from each other. In the previous case, as the kernel is supported on $\re^n$, the point $x_0$ (interior touching point or point of non-transversal intersection) `can see' the whole $\Omega \setminus R(\Omega)$. But here, it `can see' only the part of $\Omega \setminus R(\Omega)$ which is inside $B_r(x_0)$. So here we may have different critical hyperplanes, for the same direction, with respect to which different connected components of $\Omega$ will be symmetric. We also need to use a covering type argument since $\Omega$ is bounded. 

\begin{proof}
Since $J$ is $C^\infty$ everywhere in $\re^n$ except at $x=0$, and is radially strictly decreasing to $0$ in $B_r$, we can repeat the computation given in Lemma \ref{lem:Figalli 2.1} and get a similar lemma which proves that $H^J_\Omega$ is in $C^1(\partial \Omega)$. For that, we need to define a function $\phi:[0, \infty) \to \re_+$ such that $J(x) = \phi(|x|)$. Then we know that $\phi'(t)<0$ in $(t, r)$ for all $0<t<r$.
Suppose $\phi_\varepsilon \in C^\infty_c([0, \infty))$ is such that $\phi_\varepsilon \geq 0, \phi'_\varepsilon \leq 0$ and it satisfies the following estimates, for all $t>0$
$$
\begin{cases}
t^{n-\alpha} \phi_\varepsilon(t)  +  t^{n-\alpha+1} |\phi'_\varepsilon(t)| \leq C(n, \alpha, \alpha_1) \\
|\phi'_\varepsilon(t)| \to |\phi'(t)| \quad \text{as}~ \varepsilon \to 0 \\
\phi_\varepsilon(t) = \phi(t) \quad \text{if}~ t> \varepsilon 
\end{cases}
$$
If we define 
$$H^\varepsilon_\Omega (x) = \int_{\re^n} \tau_\Omega (y) \phi_\varepsilon(|x-y|) dy, \quad x \in \re^n $$
then $H^J_\Omega \in C^1(\partial \Omega)$ and $H^\varepsilon_\Omega \to H^J_\Omega$ in $C^1(\partial \Omega)$ as $\varepsilon \to 0$ using the very similar steps as in the proof of Lemma \ref{lem:Figalli 2.1}.
Next we fix a direction vector $e$. Without loss of generality, we assume that $e = \langle 1, 0, \cdots, 0 \rangle$. Then Alexandrov's moving plane method provides the two situations with either interior touching or non-transversal intersection. Let $x_0$ be one such point in $\partial \Omega$ and $\pi_\lambda$ be the critical hyperplane corresponding to the direction $e$. We will denote such type of point as `point of contact'. Let $R_\lambda$ be the reflection operator with respect to $\pi_\lambda$, which will be denoted as operator $R$ for simplicity. Next we define,
$${\pi_\lambda}_+ = \{x: x_1 > \lambda \} \quad \text{and}~ \quad  {\pi_\lambda}_- = \{x: x_1 < \lambda \}$$
We also define
$$
\begin{cases}
\Omega_+ = \Omega \cap {\pi_\lambda}_+, \quad \text{and}~\quad \Omega_- = \Omega \cap {\pi_\lambda}_- \\
\partial \Omega_+ = \partial \Omega \cap {\pi_\lambda}_+, \quad \text{and}~\quad \partial \Omega_- = \partial \Omega \cap {\pi_\lambda}_-
\end{cases}
$$

 We know that if $x_0$ is an interior touching point then $x_0 \in \partial \Omega_-$. On the other hand, if $x_0$ is a non-transversal intersection then $x_0 \in \partial \Omega \cap \pi_\lambda$. Next if we repeat all the computations as in the proof of Theorem \ref{thm:c2beta_bdd}, we get the following two equations similar to \eqref{eq: interior} and \eqref{eq:non transversal},
 $$0 = \int_{\Omega \setminus R(\Omega)} \big(J(x_0-y)-J(x_0 - R(y)) dy $$
 and,
 $$  0 = \lim_{\varepsilon \to 0} \bigg[ \int_{\Omega \setminus R(\Omega)} \phi'_\varepsilon(|x_0-y|) \frac{|\lambda-y_1|}{|x_0-y|} \, dy + \int_{R(\Omega) \setminus \Omega} \phi'_\varepsilon(|x_0-y|) \frac{|\lambda - y_1|}{|x_0-y|} \, dy \bigg]$$
 where $y_1 = y \cdot e$. Now, in the interior case, $x_0 \in \partial \Omega_-$. Then for any $y \in \Omega \setminus R(\Omega)$, $|x_0 - y| < |x_0 - R(y)|$. As $J$ is strictly decreasing and $J$ has support in $B_r$, we have,
  \begin{align}\label{eq: bounded smooth kernel}
    |(\Omega \Delta R(\Omega))\cap B_r(x_0)|=0
\end{align}
On the other hand, in the non-transversal case, $\phi'_\varepsilon \leq 0$ for all $t \geq 0$. Since $\phi_\varepsilon(t) = \phi(t)$ when $t>\varepsilon$ then $\phi'_\varepsilon <0$ in the interval $(\varepsilon, r)$. Again defining, 
    $$E^1_{\varepsilon} = \{ y \in \Omega \setminus R(\Omega) : |x_0-y| > \varepsilon \}, E^2_{\varepsilon} = \{ y \in R(\Omega) \setminus \Omega : |x_0-y| > \varepsilon \},  $$
    we see that 
    $$|E^1_\varepsilon \cap B_r(x_0)| = |E^2_\varepsilon \cap B_r(x_0)|=0$$ for every $\varepsilon>0$. That gives us $|\Omega \setminus R(\Omega)| = |R(\Omega) \setminus \Omega| = 0$ inside $B_r(x_0)$ and hence \eqref{eq: bounded smooth kernel} is true.
From \eqref{eq: bounded smooth kernel}, we have 
\begin{equation}\label{eq:measure_zero}
    |(\Omega \setminus R(\Omega))\cap B_r(x_0)| = |(  R(\Omega) \setminus \Omega)\cap B_r(x_0)|=0
\end{equation}
As $\partial \Omega$ is $C^{1, \beta}$ for $\beta >0$ then for any $x_0 \in \partial \Omega$, $|B_\varepsilon(x_0) \cap \Omega| >0$ for every $\varepsilon>0$. Then \eqref{eq:measure_zero} can only happen, when $\Omega = R(\Omega)$ inside $B_r(x_0)$. Or we can say that their boundaries coincide inside $B_r(x_0)$. Let $\Omega$ be a union of disjoint connected components. Then $x_0$ lies on the boundary of one such component, say $\cN_x$. Again if $x_0$ is interior touching point then $x_0 \in \partial {\cN_x}_-$. On the other hand, if it is a non-transversal intersection point, then $x_0 \in \partial \cN_x \cap \pi_\lambda$. We already showed that, since $x_0$ is a point of contact, $\Omega = R(\Omega)$ in $B_r(x_0)$ and hence the boundaries coincide. If $\partial {\cN_x}_- \subseteq B_r(x_0)$ then we can say that 
\begin{equation}\label{eq:set symmetry}
    \Omega = R(\Omega), \quad \text{everywhere in }~ {\cN_x} .
\end{equation}

Otherwise, we choose a point $z_0 \in \partial {\cN_x}_-  \cap B_{r}(x_0))$ such that $|x_0-z_0| \geq r/2$. Notice that $z_0$ is a point of interior touching with respect to the hyperplane $\pi_\lambda$. Using similar analysis as in interior touching, we have 
  $$|(\Omega \setminus R(\Omega))\cap B_r(z_0)|= |(R(\Omega) \setminus \Omega) \cap B_r(z_0)| = 0$$ 
  that implies $\Omega  = R(\Omega)$ inside $B_r(z_0)$. Again if $ \partial {\cN_x}_- \subseteq B_r(x_0) \cup B_r(z_0)$ then we have \eqref{eq:set symmetry}. Otherwise, we choose a point $z_1 \in \partial {\cN_x}_- \cap B_{r}(z_0)$ such that $|x_0 - z_1|\geq r/2$ and $ |z_1- z_0| \geq r/2$. Likewise, in the next iteration, we choose a point $z_2 \in \partial {\cN_x}_- \cap B_{r}(z_1)$ such that $|z_2-x_0|>r/2, |z_2-z_0|>r/2$ and $|z_2-z_1|>r/2$ if $\partial {\cN_x}_-$ is not contained in the union of those balls, i.e, $B_r(x_0) \cup B_r(z_0) \cup B_r(z_1)$. This construction allows only finitely many iterations since $\partial \Omega$ and hence $\partial \cN_x$ is compact. Hence, we should get \eqref{eq:set symmetry}. That further implies that ${\cN_x}_+ = R({\cN_x}_-)$ and $\cN_x$ is symmetric about $\pi_\lambda$. 
  
  Next we consider any two disjoint connected components of $\Omega$. Let they be $\cP$ and $\cQ$.
  Then we define,
  $$d(\cP, \cQ) = \inf \{ \dist(x,y): x \in \partial \cP , y \in \partial \cQ  \}. $$
  Similarly we can define,
  $$d(\cP_-, \cQ_-) = \inf \{ \dist(x,y): x \in \partial \cP_- , y \in \partial \cQ_-  \}, $$
  and
  $$d(\cP_+, \cQ_+) = \inf \{ \dist(x,y): x \in \partial \cP_+ , y \in \partial \cQ_+  \}. $$
  In all these definitions above, if any set, say $\cP_+$, is empty then the distance would be $+\infty$. 
  Now suppose there is a component $\cP$ of $\Omega$ such that $d(\cP, \cN_x) < r$ then one can see that $d(\cP_-, {\cN_x}_-)<r$. From the definition of infimum, there exit $a \in \partial {\cN_x}_-, b \in \cP_-$ such that $d(a,b)<r$. Since $\Omega$ has $C^{1,\beta}$ boundary, we must have $|B_r(a) \cap \cP_-|>0$. Now we already know that $a$ is an interior point and $\Omega = R(\Omega)$ everywhere in $B_r(a)$.  Hence $\Omega = R(\Omega)$ inside $B_r(a) \cap \cP_-$ and hence $b$ is an interior point. Now we extend the analysis on $b$ to the whole $\cP_-$ and get 
  $$\Omega = R(\Omega), \quad \hbox{inside}~ \cN_x \cup \cP. $$
  For a component $\cQ$, if $d(\cQ, \cN_x) \geq r$ then we can not extend analysis on $\cQ$ from $\cN_x$ directly. On the other hand, if $d(\cQ, \cP) <r$ then we can extend the analysis from $\cN_x$ to $\cQ$ via $\cP$. In that way, we say that $\cN_x$ can `influence' $\cQ$.  
  After doing analysis on each components, we get $\Omega$ to be union of sets of the following two types, which are at least at a distance $r$ from each other,
  \begin{enumerate}[(i)]
      \item A set, $S$ which is symmetric about $\pi_\lambda$. The  set $S$ may not be connected itself, rather it is a union of connected symmetric components of the form $ {\cN_x}$ described as before, where one component can `influence' other components.
      \item A set $\cPn$ which is not symmetric about $\pi_\lambda$. Again, this set $\cPn$ may not be connected itself, rather it is union of connected components which have no contact point. But again, one component can `influence' the other components.
  \end{enumerate}
  In the above construction, any such two sets (either symmetric or non-symmetric) have mutual distance at least  $r$. One can see that there can be only finitely many such sets as $\Omega$ is bounded. Now we denote the corresponding critical hyperplane $\pi_\lambda$ to be $\pi^e_{\lambda^{(1)}}$. With respect to this hyperplane, we get finitley many symmetric sets of the form $S$ and finitely many non-symmetric sets of the form $\cPn$. We write union of symmetric sets, in the first iteration as $\Omega_{s^{(e, 1)}}= \cup_{i =1}^{m_1} S^{(e,1)}_i$ and union of non-symmetric sets as $\Omega_{n^{(e,1)}} = \cup_{i =1}^{l_1} \cPn^{(e,1)}_i$. We denote $\Omega^{(e,1)} = \Omega = \Omega_{s^{(e,1)}} \cup \Omega_{n^{(e,1)}}$. In the next iteration, we only consider $\Omega_{n^{(e,1)}}$ and denote $\Omega^{(e,2)} = \Omega_{n^{(e,1)}}$. We already know each set in $\Omega^{e,2}$ does not have any contact point with respect to $\pi^e_{\lambda^{(1)}}$. Therefore we move the plane again until we get a new critical hyperplane $\pi^e_{\lambda^{(2)}}$. With respect to this hyperplane, we again have $\Omega^{(e,2)} =  \Omega_{s^{(e,2)}} \cup \Omega_{n^{(e,2)}}$. We continue this process for finitely many times as $\Omega$ is bounded and get $\Omega$ to be a symmetric in the direction of $e$ but for different critical hyperplanes. If the number of iteration or critical hyeprplane is $\Lambda^e$ then, 
  $$\Omega = \cup^{\Lambda^e}_{j=1} \cup^{m_j}_{i=1} S^{(e,j)}_i$$
  where $S^{(e,j)}_i$ is one of the $m_j$ number of symmetric sets (in spirit of definition given in (i)) with respect to the critical hyeprplane $\pi^e_{\lambda^{(j)}}$.
  
  Next we change direction to any $v \in \mathbb{S}^{n-1}$ such that $v \neq e$. We use the Alexandrov's principle and get a critical hyperplane in the first iteration for this vector $v$. Let us denote that hyperplane as $\pi^v_{\lambda^{(1)}}$. Consider a subset, $S^e$ of $\Omega$, which is symmetric with respect to the direction $e$ (in sense of definition (i)) and boundary of that set contains a point of contact with respect to $\pi^v_{\lambda^{(1)}}$.  Hence the component (of $S^e$) that contains the point of contact can be proved to be symmetric with respect to $\pi^v_{\lambda^{(1)}}$ using the earlier analysis.  Now remember that $S^e$ contains different connected components where one component can `influence' another component. Then it evident that $S^e$ is symmetric with respect to $\pi^v_{\lambda^{(1)}}$. Similarly all other sets of the form $S^{(e,j)}_i$ which are symmetric in the direction $e$, can be proved to be symmetric in the direction $v$ with respect to some critical hyperplane $\pi^v_{\lambda^{(k)}}$, where $k=1, 2, \cdots, \Lambda^v$ for some $0< \Lambda^v < \infty$. As $v$ is any vector, each set of the form $S^e$ is a ball. Then $\Omega$ is a union of balls of same radii, which are at least $r$ distance away from each other.
  \end{proof}

\section{Proof of Theorem \ref{thm:c2beta_bdd_3}}

As $J$ has jump at $\partial \Omega$ we can't directly use the computations as in the proof of Theorem \ref{thm:c2beta_bdd}. We can think $J(x) = \tilde{J}(x)\chi_{B_r}(x)$, where $\tilde{J}$ satisfies all the conditions of the kernel given in Theorem \ref{thm:c2beta_bdd}.
Let us define $\phi, \phi_\varepsilon : [0, \infty) \to \re_+$ exactly as in the proof of Theorem \ref{thm:c2beta_bdd} so that $\tilde{J}(x) = \phi(|x|)$ and $\phi_\varepsilon, \phi'_\varepsilon$ converges to $\phi, \phi'$ respectively. We also introduce another sequence of functions. First consider $\Tau:(-\infty, \infty)\to \re_{-}$ be such that $\Tau$ is $C^\infty_c(-\infty, \infty)$ with support in $(-r/2, r/2)$ and $\int_\R \Tau(t) dt = -1$. Now define the function
$g:[0, \infty) \to \re_+$ such that $g(t) = 1$ when $0\leq t \leq r$. Then it is evident that $\chi_{B_r}(x) = g(|x|)$. Next we define the sequence $g_\varepsilon: [0, \infty) \to \re_+$ such that $g_\varepsilon(0) = 1$ and $g'_\varepsilon(t) = \frac{1}{\varepsilon} \Tau \bigg(\frac{t-r}{\varepsilon} \bigg)$. Now we define, $\Lambda_\varepsilon:[0, \infty) \to \re_+ : t \mapsto \phi_\varepsilon(t) g_\varepsilon(t)$.

Before we introduce the main result of this section, we want to prove a supporting lemma. For this result to be true we want the boundary of $\Omega$ to be $C^1$ with uniformly bounded $C^1$-norm as mentioned in Theorem \ref{thm:c2beta_bdd_3}.

\begin{lem} \label{lem:C1differentiability}
Let $\Omega$ be a bounded set with $C^1$-boundary as mentioned in Theorem \ref{thm:c2beta_bdd_3}. 
Consider the following function,
$$H^c_\Omega(x) = \int_{\R^n} \chi_\Omega(y) \chi_{B_r(x)}(y) dy $$
for all $x \in \R^n$. Then $H^c_\Omega(x)$ has directional derivatives at every $x \in \R^n$. 
\end{lem}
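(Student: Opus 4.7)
My plan is to adapt the smooth-approximation strategy of Lemma \ref{lem:Figalli 2.1} to the discontinuous kernel $\chi_{B_r}$ using the mollification $g_\varepsilon$ of $g(t)=\chi_{[0,r]}(t)$ constructed just before the lemma. Setting
$$H^{c,\varepsilon}_\Omega(x) := \int_{\R^n} \chi_\Omega(y)\, g_\varepsilon(|x-y|)\, dy,$$
this is $C^\infty$ in $x$ and converges to $H^c_\Omega(x)$ pointwise by dominated convergence. Differentiating in a direction $v \in \S^{n-1}$ and passing to polar coordinates $y = x + \rho\omega$ yields
$$\partial_v H^{c,\varepsilon}_\Omega(x) = -\int_0^\infty g'_\varepsilon(\rho)\, A_x(\rho)\, d\rho, \qquad A_x(\rho) := \rho^{n-1}\int_{\S^{n-1}}\chi_\Omega(x+\rho\omega)(v\cdot\omega)\,d\sigma(\omega).$$
Since $g'_\varepsilon(\rho) = \tfrac{1}{\varepsilon}\Tau\bigl(\tfrac{\rho-r}{\varepsilon}\bigr)$ is an approximation of $-\delta_{\rho=r}$ (recall $\int \Tau = -1$), one formally expects $\partial_v H^{c,\varepsilon}_\Omega(x) \to A_x(r)$ as $\varepsilon \to 0$.

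Rather than pass $\varepsilon \to 0$ inside the derivative, I would integrate the identity $H^{c,\varepsilon}_\Omega(x+hv) - H^{c,\varepsilon}_\Omega(x) = \int_0^h \partial_v H^{c,\varepsilon}_\Omega(x+sv)\,ds$ and pass $\varepsilon \to 0$ on both sides via dominated convergence, using the uniform bound on $\partial_v H^{c,\varepsilon}_\Omega$. This gives
$$H^c_\Omega(x+hv) - H^c_\Omega(x) = \int_0^h K(s)\,ds,\qquad K(s) := r^{n-1}\int_{\S^{n-1}}\chi_\Omega(x+sv+r\omega)(v\cdot\omega)\,d\sigma(\omega).$$
By the Lebesgue differentiation theorem, the one-sided directional derivatives of $H^c_\Omega$ at $x$ exist provided $K$ admits one-sided limits at $s=0$. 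For each $\omega$ the integrand is bounded by $|v\cdot\omega|\le 1$; as $s \to 0^+$ it converges pointwise to an explicit value, determined either by continuity of $\chi_\Omega$ (when $x+r\omega\notin\partial\Omega$) or by the sign of $v\cdot\nu_\Omega(x+r\omega)$ (when $x+r\omega\in\partial\Omega$ and $v$ is not tangent there). Dominated convergence then delivers $K(0^\pm)$ once the pointwise convergence holds $\sigma$-almost everywhere, and a symmetric argument handles $s \to 0^-$.

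The main obstacle is controlling the tangential exceptional set $E := \{\omega\in\S^{n-1}: x+r\omega\in\partial\Omega \text{ and } v\cdot\nu_\Omega(x+r\omega)=0\}$. If $\mathcal{H}^{n-1}(\partial\Omega\cap\partial B_r(x))=0$, the area formula for the radial projection $p\mapsto (p-x)/r$ shows $E$ is $\sigma$-null. The delicate case is when $\mathcal{H}^{n-1}(\partial\Omega\cap\partial B_r(x)) > 0$: the graph condition $\|\nabla f^{x'}\|\le M<1$ permits $\partial\Omega$ to coincide with a portion of $\partial B_r(x)$ only on a sufficiently small polar cap (angular radius at most $\arctan M$), and on any such cap $\nu_\Omega$ equals $\pm$ the radial unit vector $\omega$. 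The condition $v\cdot\nu_\Omega=0$ then reduces to $v\cdot\omega=0$, which defines a codimension-one great sphere of $\S^{n-1}$ and is $\sigma$-null. In both cases $\sigma(E)=0$, dominated convergence yields $K(0^\pm)$, and the existence of the one-sided directional derivatives of $H^c_\Omega$ at $x$ follows.
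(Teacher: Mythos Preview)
Your approach is correct in outline but takes a genuinely different route from the paper. Both start by writing $\partial_v H^{c,\varepsilon}_\Omega(x)$ in polar form as an integral of $g'_\varepsilon(\rho)$ against the spherical slice function $A_x(\rho)$ (the paper's $F(x,e,\lambda)$). The divergence lies in how the limit $\varepsilon\to 0$ is taken.

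The paper proves that $\lambda \mapsto F(x,e,\lambda)$ is \emph{Lipschitz} on $[0,r+\delta)$, uniformly in $x\in\partial\Omega$ and $e$. The key device is the divergence theorem applied to the linear harmonic function $h(y)=e\cdot y$ on the shell $\Omega\cap(B_{\lambda_2}\setminus B_{\lambda_1})$, which converts $F(x,e,\lambda_2)-F(x,e,\lambda_1)$ into an integral over $\partial\Omega\cap(B_{\lambda_2}\setminus B_{\lambda_1})$; a ray-by-ray projection analysis exploiting $\|\nabla f\|\le M<1$ then yields $|F(\lambda_2)-F(\lambda_1)|\le C|\lambda_2-\lambda_1|$. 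This quantitative estimate gives \emph{uniform} $C^1(\partial\Omega)$ convergence of $H^{c,\varepsilon}_\Omega$, which is what is actually invoked downstream in Lemmas~\ref{lem:C1differentiability_2} and~\ref{lem:Figalli 2.1 with jump}. Your argument sidesteps radial continuity entirely: you freeze $\rho=r$, pass to the integrated identity $H^c_\Omega(x+hv)-H^c_\Omega(x)=\int_0^h K(s)\,ds$, and study continuity of $K$ in the translation parameter $s$ by pointwise dominated convergence. This is more elementary (no divergence theorem, no projection geometry) but delivers only pointwise directional derivatives, not the uniform $C^1$ control the paper extracts and later needs.

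Two remarks on details. First, your Case~2 never arises for $x\in\partial\Omega$: the paper observes that the graph bound $\|\nabla f\|<1$ forces each tangent-plane ray, lifted to the graph, to meet $\partial B_\lambda(x)$ in exactly one point, whence $\mathcal{H}^{n-1}(\partial\Omega\cap\partial B_\lambda(x))=0$ for every $\lambda<r+\delta$. So you are always in Case~1, the two one-sided limits of $K$ coincide, and you obtain the genuine (two-sided) derivative. Second, your Case~2 reasoning as written is imprecise: positive $\mathcal{H}^{n-1}$-measure of $\partial\Omega\cap\partial B_r(x)$ does not force the intersection to be a polar cap, and the graph hypothesis is posed at points $x'\in\partial\Omega$, not at the possibly interior point $x$. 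The salvageable fact is that where two $C^1$ hypersurfaces overlap on a set of positive $\mathcal{H}^{n-1}$-measure their normals must agree $\mathcal{H}^{n-1}$-a.e., after which $\sigma(E)=0$ follows as you say; but since Case~2 is vacuous under the lemma's standing hypotheses this is moot.
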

\begin{proof}
First we define,
$$H^c_\varepsilon(x) = \int_{\Omega} g_\varepsilon(|x-y|)dy $$
then, for $e \in T_x(\partial \Omega)$, since $g_\varepsilon$ is $C^\infty$, we can write, 
$$\nabla H^c_\varepsilon(x) \cdot e = \int_{\Omega} \nabla g_\varepsilon(|x-y|) \cdot e dy. $$
Consider a point $x \in \partial \Omega$, $e$ is any tangent vector of $x$. Next we define,
$$F(x,e,\lambda) = \int_{\Omega \cap \partial B_\lambda(x)} (e \cdot e_y) d \sigma(y), $$
where $e_y$ is the unit normal vector at each $y \in \Omega \cap \partial B_\lambda(x)$. Then we have,
\begin{align*}
    \nabla H^c_\varepsilon(x) \cdot e &= \int_{\Omega} \nabla g_\varepsilon(|x-y|) \cdot e dy\\
    &= \int_{\Omega} g'_\varepsilon(|x-y|) \frac{(x-y)}{|x-y|} \cdot e dy \\
    &= \int^\infty_0 \int_{\Omega \cap \partial B_\lambda (x) } g'_\varepsilon(\lambda)(e_y \cdot e) d\sigma(y) d \lambda \quad [\hbox{where}~ x-y = \lambda e_y]\\
    &= \int^\infty_0 g'_\varepsilon(\lambda) F(x,e,\lambda) d \lambda \\
    &= \int^\infty_0 \frac{1}{\varepsilon} \Tau \bigg(\frac{\lambda -r}{\varepsilon}  \bigg) F(x,e,\lambda) d \lambda
\end{align*}
Now we want to prove that $F(x,e,\lambda)$ is a continuous function of $\lambda$ when $0 \leq \lambda < r+\delta$.
After rotation and translation, we can assume $x=0$, $\nabla f^x(0) =0$. For simplicity, we will now denote $f^x = f$. Next we define $B'_\rho = \{ y':|y'| <\rho \}$, $L_\rho = (-\rho, \rho) \times B'_\rho$,
$$\partial \Omega \cap L_{r+\delta} = \{(y',f(y')): y' \in B'_{r+ \delta} \} = (Id \times f) \big(B'_{r + \delta} \big) = G_{r+\delta},$$
and
$$\Omega \cap L_{r+\delta} = \{(y',y_n): y_n > f(y') \}. $$
 We have, for some $\lambda_1 < \lambda_2 < r+\delta$,
$$F(0,e,\lambda_2) - F(0,e,\lambda_1) = \int_{\Omega \cap \partial B_{\lambda_2}} (e \cdot e_y) d \sigma(y) - \int_{\Omega \cap \partial B_{\lambda_1}} (e \cdot e_y) d \sigma(y)  $$
If we define $h(y) = e \cdot y$ for any $y \in \R^n$ then $h(y)$ is a harmonic function and $\nabla_y h = e$. 
Next we observe that since $\norm{\nabla f} <1$, then for any tangent direction $\nu$ at the point $0$, $f$ intersects $\partial B_\lambda$ at only one point. That further implies 
$$\mathcal{H}^{n-1}(\partial \Omega \cap \partial B_\lambda(x)) =0 \quad \hbox{when}~\lambda < r+ \delta.$$
Next using the Gauss divergence theorem and proper orientation of the normal vector, we see that
$$\int_{\Omega \cap \partial B_{\lambda_2}} (e \cdot n_y) d \sigma(y) + \int_{\Omega \cap \partial B_{\lambda_1}} (e \cdot n_y) d \sigma(y) + \int_{\partial \Omega \cap ( B_{\lambda_2} \setminus B_{\lambda_1} )} (e \cdot n_y) d \sigma(y) = \int_{\omega} \Delta h dy =0, $$
where $\omega$ is the region bounded by the surfaces $\Omega \cap \partial B_{\lambda_2}, \Omega \cap \partial B_{\lambda_1}, \Omega \cap ( B_{\lambda_2} \setminus B_{\lambda_1} )$ and $n_y$ is the normal vector with the correct orientation. Therefore,
\begin{align*}
   F(0, e, \lambda_2) - F(0, e, \lambda_1) &= -\int_{\partial \Omega \cap ( B_{\lambda_2} \setminus B_{\lambda_1} )} (e \cdot n_y) d \sigma(y) \\
   &=-\int_{\partial \Omega \cap  B_{\lambda_2} } (e \cdot n_y) d \sigma(y) + \int_{\partial \Omega \cap  B_{\lambda_1} } (e \cdot n_y) d \sigma(y) . 
\end{align*}

Next we assume that $P_{\lambda}$ is the projection of $\partial \Omega \cap B_\lambda$ on the $x_n = 0$ hyperplane. Then $P_{\lambda_1} \subseteq P_{\lambda_2} \subseteq B'_{r+\delta}$. Using $n_y = \frac{(-\nabla f(y'), 1)}{\sqrt{1 + |\nabla f(y')|^2}}$ and $e \cdot e_n = 0$, we have $e \cdot n_y = \frac{-\nabla f(y')}{\sqrt{1+ |\nabla f(y')|^2}}$ and $d \sigma(y) =\sqrt{1+ |\nabla f(y')|^2} dy' $
\begin{align*}
   F(0, e, \lambda_2) - F(0, e, \lambda_1) &= -\int_{P_{\lambda_2}} (-\nabla f(y')) dy' + \int_{P_{\lambda_1}} (-\nabla f(y')) dy' \\
   =\int_{P_{\lambda_2}\setminus P_{\lambda_1}}\nabla f(y') dy'
\end{align*}

 That implies,
\begin{align*}
  \big|F(0,e,\lambda_2) - F(0,e, \lambda_1) \big| 
  &\leq \int_{P_{\lambda_2} \setminus P_{\lambda_1}} |\nabla f(y')| dy' \\
  &\leq \mathcal{H}^{n-1} \big( P_{\lambda_2} \setminus P_{\lambda_1} \big)= \mathcal{H}^{n-1}(P_{\lambda_2}) - \mathcal{H}^{n-1}(P_{\lambda_1})
\end{align*}
Next we fix a unit tangent vector $\nu$ of point $x=0$. We want to find $\mathcal{H}^1(P_{\lambda_2} \cap \nu ) - \mathcal{H}^1(P_{\lambda_1} \cap \nu )$. 

\begin{center}
    \hspace*{-30pt}
    \parbox{2.9in}{\scalebox{.71}{\includegraphics[trim=155pt 380pt 175pt 130pt,clip]{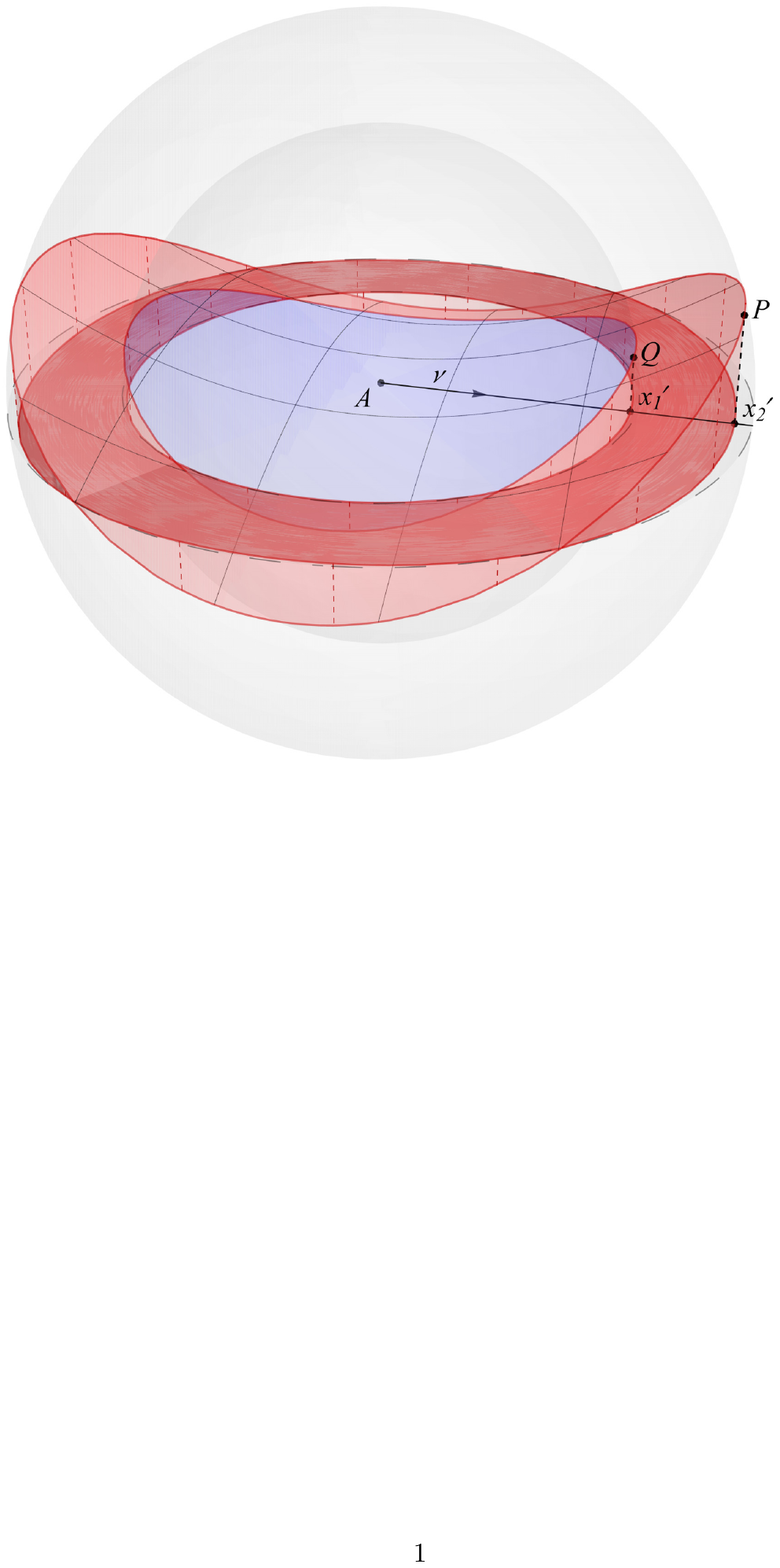}}}
    \parbox{2in}{\scalebox{.59}{\includegraphics[trim=155pt 330pt 125pt 130pt,clip]{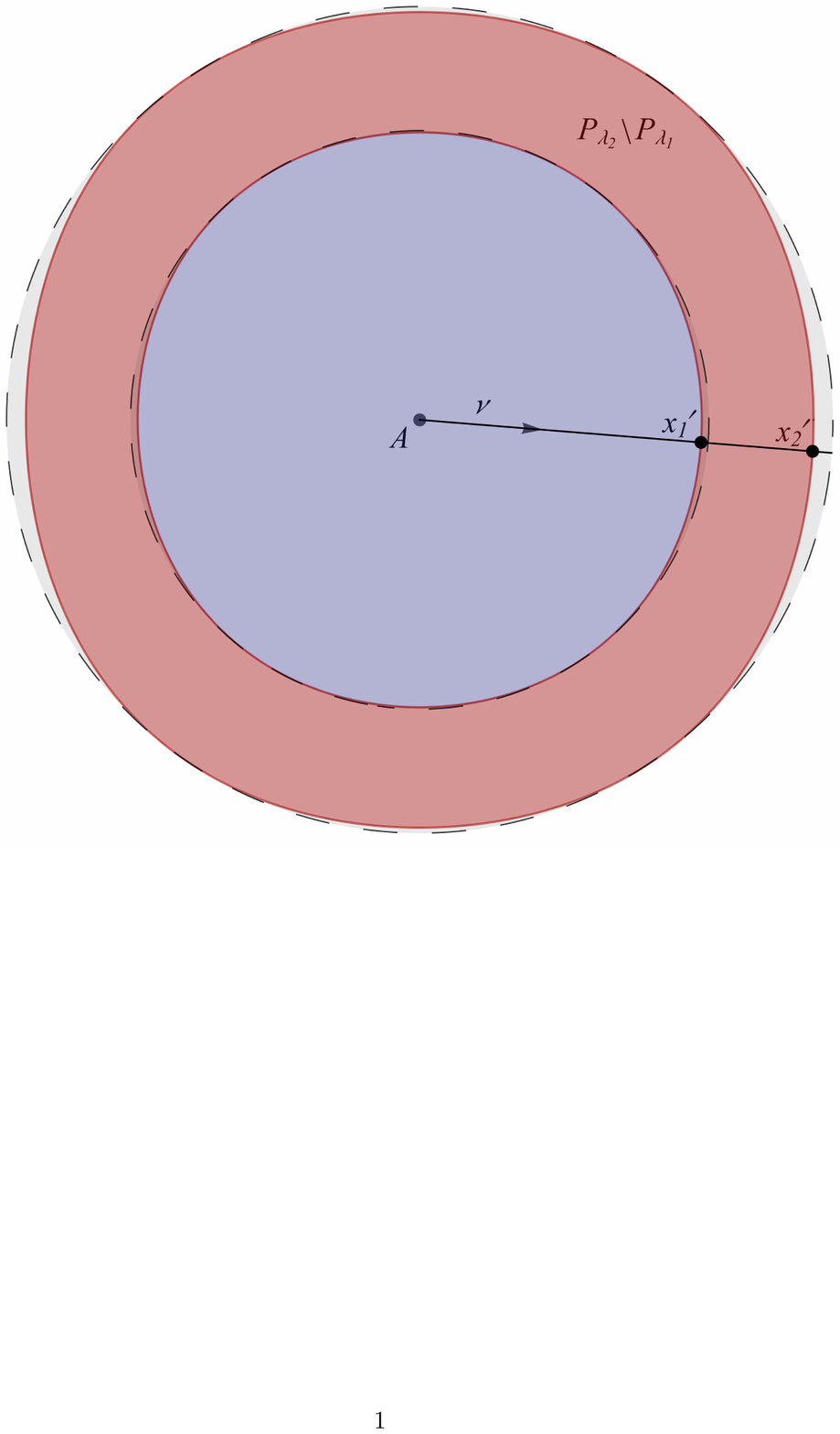}}}
\end{center}
\vspace{0.2 in}

Let $Q$ be the intersection point of $f$ with $\partial B_{\lambda_1}$ and $P$ be the intersection of $f$ with $\partial B_{\lambda_2}$ along the direction $\nu$. 
Next we define $G(x') = \sqrt{(x')^2 + f(x')^2}$ and along that tangent direction $\nu$, we use $\rho$ to denote the distance (from $0$) of a point in $x_n=0$  hyperplane. Then $x' = \rho \nu$ and $Q$ corresponds to $G(x'_1) = \lambda_1$, $P$ corresponds to $G(x'_2) = \lambda_2$. Or we can also say that
$$G(\rho_1;\nu) = \sqrt{\rho^2_1 + f(\rho_1 \nu)^2} = \lambda_1, \quad G(\rho_2;\nu) =\sqrt{\rho^2_2 + f(\rho_2 \nu)^2} = \lambda_2 $$
Since for a fixed $\nu$, there is a unique $\rho$ for each $\lambda$ and hence we we can write $\rho = Gn(\lambda;\nu)$. Then 
$$G(Gn(\lambda;\nu);\nu) = \lambda. $$
Next differentiating w.r.t $\lambda$,
$$\frac{dG}{d\rho} \frac{d \rho}{d \lambda} = 1. $$
Now $$\frac{dG}{d \rho} = \nabla_{x'} G \cdot \frac{x'}{\rho}$$
where $\nabla_{x'} G = \frac{x' + f(x') \nabla f(x')}{G(x')}$. Using the fact that $|\nabla f(x')| \leq M$, we have $|f(x') \nabla f(x') \cdot x'| \leq M^2 |x'|^2$. That further implies $\frac{dG}{d \rho} \geq \frac{(1-M^2)|x'|^2}{|x'| G(x')} = \frac{(1-M^2)\rho}{G(x')}= \frac{(1-M^2)\rho}{\lambda}$. Then we have
$$ \frac{d \rho}{d \lambda} \leq \frac{\lambda}{(1-M^2) \rho}$$
Next, we notice that $\rho \geq \frac{\lambda}{\sqrt{1+M^2}}$ which implies 
$$\frac{d \rho}{d \lambda} \leq \frac{\sqrt{1+M^2}}{(1-M^2) }. $$
Hence we have, if $(\lambda_2 - \lambda_1)$ is small, 
$$\mathcal{H}^1(P_{\lambda_2} \cap \nu) - \mathcal{H}^1(P_{\lambda_2} \cap \nu) =  Gn(\lambda_2;\nu) - Gn(\lambda_1;\nu)\leq C(\lambda_2 - \lambda_1)$$
This is true for any tangent vector $\nu$ which further implies that the distance between the $\partial P_{\lambda_2}$ and $\partial P_{\lambda_1}$ is always bounded by $C(\lambda_2 - \lambda_1)$ in any direction, when $(\lambda_2 - \lambda_1)$ is small. Furthermore, we have 
\begin{align*}
\mathcal{H}^{n-1}(P_{\lambda_2}) - \mathcal{H}^{n-1}(P_{\lambda_2}) &=  \int_{B^{n-2}} \int^{Gn(\lambda_2;\nu)}_{Gn(\lambda_1;\nu)} \rho^{n-2} d\rho \, d \Theta(\nu) \\
&= \int_{B^{n-2}} \frac{1}{n-1} \big[Gn(\lambda_2;\nu)^{n-1} - Gn(\lambda_1;\nu)^{n-1}]\, d \Theta(\nu) \\
&\leq C_n (\lambda_2 - \lambda_1)
\end{align*}

As a consequence, $F(0,e,\lambda)$ is continuous in $\lambda$ and the modulus of continuity does not depend on $x,e$.  Then we can write,
\begin{align*}
  &\bigg|\int^\infty_0 \frac{1}{\varepsilon} \Tau \bigg(\frac{\lambda - r}{\varepsilon} \bigg) F(x,e,\lambda) \, d\lambda +  F(x,e,r) \bigg| \\
  &=\bigg|\int^\infty_0  \frac{1}{\varepsilon} \Tau \bigg(\frac{\lambda - r}{\varepsilon} \bigg) F(x,e,\lambda) \, d\lambda - \int^\infty_0 \frac{1}{\varepsilon} \Tau \bigg(\frac{\lambda - r}{\varepsilon} \bigg)  F(x,e,r) d \lambda \bigg|\\
  &= \bigg| \int^\infty_0 \frac{1}{\varepsilon} \Tau \bigg(\frac{\lambda - r}{\varepsilon} \bigg) \big(   F(x,e,\lambda) - F(x,e,r) \big) \, d\lambda  \bigg| \\
  &\leq  \int_{|\lambda -r|\leq r \varepsilon/2} \bigg| \frac{1}{\varepsilon} \Tau \bigg(\frac{\lambda - r}{\varepsilon} \bigg) \big(   F(x,e,\lambda) - F(x,e,r) \big) \bigg| \, d\lambda \\
  &\quad + \int_{|\lambda -r|> r \varepsilon/2} \bigg| \frac{1}{\varepsilon} \Tau \bigg(\frac{\lambda - r}{\varepsilon} \bigg) \big(  F(x,e,\lambda) - F(x,e,r) \big) \bigg| \, d\lambda
\end{align*}
Since $ F(x,e,\lambda)$ is uniformly continuous in the interval $[0, r+\delta/2]$ then for small $\varepsilon_0$, as $\varepsilon \to 0$, 
$$\int^\infty_0  \frac{1}{\varepsilon} \Tau \bigg(\frac{\lambda - r}{\varepsilon} \bigg) F(x,e,\lambda) \, d\lambda \to  F(x,e,r) = - \int_{\Omega \cap \partial B_r(x)} (e_y \cdot e )\, d\sigma(y). $$  
Then $H^c_\varepsilon$ converges in $C^1(\partial \Omega)$ norm. 
\end{proof}

\begin{rem}
Without the given boundary conditions, as described in Theorem \ref{thm:c2beta_bdd_3}, curvature function may not be differentiable. Let us consider the following example. Suppose $0 \in \partial \Omega$ and the boundary function $f$ follows some part of the sphere $\partial B_r$. Then we can see that $F$ is not continuous at the point $\lambda = r$. 
\end{rem}

\begin{lem}\label{lem:C1differentiability_2}
Let $\Omega$ be a bounded set with $C^1$-boundary as mentioned in Theorem \ref{thm:c2beta_bdd_3}. 
Consider the following function,
$$H^c_\Omega(x) = \int_{\R^n} \tau_\Omega(y) \chi_{B_r(x)}(y) dy $$
for all $x \in \R^n$. Then $H^c_\Omega$ is in $C^1(\partial \Omega)$. 
\end{lem}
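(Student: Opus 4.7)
The plan is to reduce Lemma \ref{lem:C1differentiability_2} to Lemma \ref{lem:C1differentiability} by the elementary identity $\tau_\Omega = \chi_\Omega - \chi_{\Omega^c} = 2\chi_\Omega - 1$. Writing
\begin{equation*}
H^c_\Omega(x) = \int_{\R^n} (2\chi_\Omega(y) - 1)\, \chi_{B_r(x)}(y)\, dy = 2 \int_{\R^n} \chi_\Omega(y)\,\chi_{B_r(x)}(y)\, dy - |B_r|,
\end{equation*}
one sees that, up to an additive constant and an overall factor of two, this is exactly the function treated in the previous lemma. Since $|B_r|$ contributes nothing to any derivative, it suffices to transfer the regularity conclusion from Lemma \ref{lem:C1differentiability} to the signed integrand.

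What the proof of Lemma \ref{lem:C1differentiability} actually delivers, through its approximation argument with the mollified indicator $g_\varepsilon$, is more than the mere existence of a directional derivative: its final line establishes convergence of $H^c_\varepsilon$ in the $C^1(\partial\Omega)$ norm. The limiting tangential derivative at $x \in \partial \Omega$ in a tangent direction $e$ is
\begin{equation*}
-F(x,e,r) = -\int_{\Omega \cap \partial B_r(x)} (e_y \cdot e)\, d\sigma(y),
\end{equation*}
and the continuity of the map $x \mapsto F(x,e,r)$, uniformly in $e$ with $|e|=1$, is exactly what promotes pointwise directional differentiability to $C^1(\partial \Omega)$ membership. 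This uniformity was built into the preceding argument through the uniform $C^1$-norm bound on $\partial \Omega$ imposed in Theorem \ref{thm:c2beta_bdd_3} (without which the hitch noted in the remark following Lemma \ref{lem:C1differentiability} would spoil continuity of $F$ at $\lambda = r$).

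Multiplying through by the factor of two then yields the conclusion that the signed curvature function lies in $C^1(\partial \Omega)$, with the explicit formula
\begin{equation*}
\nabla H^c_\Omega(x) \cdot e = -2 \int_{\Omega \cap \partial B_r(x)} (e_y \cdot e)\, d\sigma(y), \qquad x \in \partial \Omega,\ e \in T_x(\partial \Omega).
\end{equation*}
There is no substantive obstacle here; the statement is a direct corollary of Lemma \ref{lem:C1differentiability}. The only step that even requires comment is the observation that subtraction of the constant $|B_r|$ does not affect tangential derivatives on $\partial \Omega$, so the $C^1$-regularity of the characteristic-function version transfers cleanly to the signed version.
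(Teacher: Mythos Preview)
Your reduction is correct and, if anything, slightly cleaner than the paper's own argument. Both proofs amount to ``this is immediate from Lemma~\ref{lem:C1differentiability}'', but they implement the reduction differently. The paper redefines the auxiliary function as
\[
F(x,e,\lambda) = \int_{\Omega^c \cap \partial B_\lambda(x)} (e \cdot e_y)\, d\sigma(y) - \int_{\Omega \cap \partial B_\lambda(x)} (e \cdot e_y)\, d\sigma(y)
\]
and asserts that the proof of Lemma~\ref{lem:C1differentiability} goes through verbatim with this new $F$. Your route instead exploits the algebraic identity $\tau_\Omega = 2\chi_\Omega - 1$ to write the signed curvature as $2\cdot(\text{Lemma~\ref{lem:C1differentiability}'s functional}) - |B_r|$, so nothing needs to be re-run at all. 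Your observation that the proof of Lemma~\ref{lem:C1differentiability} actually delivers $C^1(\partial\Omega)$ convergence (not merely existence of directional derivatives, as its statement literally claims) is exactly what is needed, and the paper relies on the same unstated strengthening. The two approaches are equivalent in content; yours is the more direct packaging.
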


\begin{proof}
This proof is a consequence of Lemma \ref{lem:C1differentiability} after defining 
$$F(x,e,\lambda) = \int_{\Omega^c \cap \partial B_\lambda(x)} (e \cdot e_y) d \sigma(y) - \int_{\Omega \cap \partial B_\lambda(x)} (e \cdot e_y) d \sigma(y), $$
where $e_y$ is the unit normal vector at each $y \in \Omega \cap \partial B_\lambda(x)$. 

\end{proof}


\begin{lem}\label{lem:Figalli 2.1 with jump}
We consider $\Lambda_\varepsilon$ which is defined as above. Then we define 
$$H^\varepsilon_\Omega (x) = \int_{\re^n} \tau_\Omega (y) \Lambda_\varepsilon(|x-y|) dy,  $$
$x \in \re^n$. Next we assume that $\Omega$ is nonempty bounded open set with $C^1$-boundary as mentioned in Theorem \ref{thm:c2beta_bdd_3}. In addition to that, $\partial \Omega$  is $C^{1, \beta}$ for some $\beta =\min \{\alpha - 1, 0 \}$.  Then $H^J_\Omega \in C^1(\partial \Omega)$ and $H^\varepsilon_\Omega \to H^J_\Omega$ in $C^1(\partial \Omega)$ as $\varepsilon \to 0$.
\end{lem}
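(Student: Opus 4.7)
The plan is to exploit the product structure $\Lambda_\varepsilon = \phi_\varepsilon\, g_\varepsilon$ and split the $x$-gradient into a piece that carries the singularity at the origin (through $\phi_\varepsilon'$) and a piece that carries the jump at $|x-y|=r$ (through $g_\varepsilon'$). For the first I will reduce to the argument already carried out in Lemma~\ref{lem:Figalli 2.1}; for the second I will pass to polar coordinates and use the continuity of the spherical averages established in Lemmas~\ref{lem:C1differentiability} and~\ref{lem:C1differentiability_2}. The $C^0$ convergence $H^\varepsilon_\Omega\to H^J_\Omega$ on $\partial\Omega$ is immediate from dominated convergence, since $0\le\Lambda_\varepsilon(|x-y|)\le C|x-y|^{-(n-\alpha)}$ is uniformly integrable on bounded sets.

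For the gradient, I would fix $x\in\partial\Omega$ and a unit tangent vector $e\in T_x(\partial\Omega)$. Since $\Lambda_\varepsilon\in C^\infty_c([0,\infty))$, differentiation under the integral gives
\begin{equation*}
\nabla H^\varepsilon_\Omega(x)\cdot e \;=\; I_1^\varepsilon(x,e)+I_2^\varepsilon(x,e),
\end{equation*}
where
\begin{equation*}
I_1^\varepsilon(x,e)=\int_{\re^n}\tau_\Omega(y)\,\phi_\varepsilon'(|x-y|)\,g_\varepsilon(|x-y|)\,\frac{(x-y)\cdot e}{|x-y|}\,dy,
\end{equation*}
\begin{equation*}
I_2^\varepsilon(x,e)=\int_{\re^n}\tau_\Omega(y)\,\phi_\varepsilon(|x-y|)\,g_\varepsilon'(|x-y|)\,\frac{(x-y)\cdot e}{|x-y|}\,dy.
\end{equation*}
Treating $g_\varepsilon$ as a uniformly bounded cutoff that converges to $\chi_{[0,r]}$ pointwise off $\{r\}$, $I_1^\varepsilon$ becomes exactly the object analysed in Lemma~\ref{lem:Figalli 2.1} (with the kernel $\phi_\varepsilon$ satisfying the same decay estimate there), truncated to $B_r(x)$. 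The $C^{1,\beta}$ boundary regularity with $\beta+\alpha>1$ controls the singular integrand near $y=x$ uniformly in $x$; combining Lemma~\ref{lem:Figalli 2.1} with dominated convergence in the $g_\varepsilon$-factor yields $I_1^\varepsilon(x,e)\to\int_{B_r(x)}\tau_\Omega(y)\,\phi'(|x-y|)\,\frac{(x-y)\cdot e}{|x-y|}\,dy$ uniformly in $(x,e)$.

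For $I_2^\varepsilon$, I would change to polar coordinates $y=x-\lambda e_y$ and define
\begin{equation*}
\tilde F(x,e,\lambda)=\int_{\partial B_\lambda(x)}\tau_\Omega(y)(e_y\cdot e)\,d\sigma(y),
\end{equation*}
so that $I_2^\varepsilon(x,e)=\int_0^\infty \phi_\varepsilon(\lambda)\,g_\varepsilon'(\lambda)\,\tilde F(x,e,\lambda)\,d\lambda$. Because $g_\varepsilon'(\lambda)=\frac{1}{\varepsilon}\Tau\!\left(\frac{\lambda-r}{\varepsilon}\right)$ is supported in $(r-\varepsilon r/2,r+\varepsilon r/2)$ and integrates to $-1$, this concentrates on $\lambda=r$. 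The key input is Lemma~\ref{lem:C1differentiability_2}: under the uniform $C^1$-norm hypothesis on $\partial\Omega$, the map $\lambda\mapsto \tilde F(x,e,\lambda)$ is continuous on $[0,r+\delta)$ with a modulus of continuity independent of $x\in\partial\Omega$ and of the unit vector $e$. Combining this uniform modulus with the uniform convergence $\phi_\varepsilon(\lambda)\to\phi(\lambda)$ near $\lambda=r$, a standard approximation-to-identity argument (precisely the one used at the end of the proof of Lemma~\ref{lem:C1differentiability}) gives
\begin{equation*}
I_2^\varepsilon(x,e)\;\longrightarrow\;-\phi(r^-)\,\tilde F(x,e,r)\qquad\text{uniformly in }(x,e).
\end{equation*}
Adding the two limits defines $\nabla H^J_\Omega(x)\cdot e$ and, since both limits are continuous in $x\in\partial\Omega$, we obtain $H^J_\Omega\in C^1(\partial\Omega)$ together with $H^\varepsilon_\Omega\to H^J_\Omega$ in $C^1(\partial\Omega)$.

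The main obstacle is the uniformity in $x$ of the convergence of $I_2^\varepsilon$: a single-$x$ limit would follow from scalar continuity of $\lambda\mapsto\tilde F(x,e,\lambda)$, but the $C^1(\partial\Omega)$ convergence requires a modulus of continuity for $\tilde F$ that does not depend on the base point or on the tangent direction. This is exactly where the hypothesis $\|\nabla f^x\|_\infty\le M<1$ uniformly on $\partial\Omega$ enters: it guarantees, as in the geometric estimate of Lemma~\ref{lem:C1differentiability}, that $\partial\Omega$ is transverse to every sphere $\partial B_\lambda(x)$ for $\lambda\le r+\delta$ with a uniform transversality constant, yielding the uniform Lipschitz-in-$\lambda$ bound $|\tilde F(x,e,\lambda_2)-\tilde F(x,e,\lambda_1)|\le C_n|\lambda_2-\lambda_1|$. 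Once this uniform continuity is in hand the rest is bookkeeping.
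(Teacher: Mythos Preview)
Your proposal is correct and follows essentially the same route as the paper: the same product-rule split $\nabla H^\varepsilon_\Omega\cdot e = I_1^\varepsilon + I_2^\varepsilon$, the same appeal to Lemma~\ref{lem:Figalli 2.1} (via a Cauchy-sequence argument, with the extra cutoff $g_\varepsilon$ handled by its boundedness and the smallness of the annulus where it differs from $g$) for the $\phi_\varepsilon'$-piece, and the same polar-coordinate reduction of the $g_\varepsilon'$-piece to an approximate-identity limit against the spherical average $\tilde F(x,e,\lambda)$, whose uniform-in-$(x,e)$ continuity from Lemmas~\ref{lem:C1differentiability}--\ref{lem:C1differentiability_2} yields the $C^1(\partial\Omega)$ convergence. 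Your identification of the uniform modulus of continuity of $\tilde F$ as the crux, and of the hypothesis $\|\nabla f^x\|\le M<1$ as what supplies it, matches the paper exactly.
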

\begin{proof}
Using the definition of $H^\varepsilon_\Omega$, we compute, for some $e \in T_x(\partial \Omega)$,
$$\nabla H^\varepsilon_\Omega(x) \cdot e = \int_{\re^n}\tau_\Omega(y) {\phi_\varepsilon}(|x-y|) \nabla g_\varepsilon(|x-y|) \cdot e \, dy + \int_{\re^n}\tau_\Omega(y) (\nabla {\phi_\varepsilon}(|x-y|) \cdot e)  g_\varepsilon(|x-y|)  \, dy $$
We notice that 
$$\int_{\re^n}\tau_\Omega(y) (\nabla {\phi_\varepsilon}(|x-y|) \cdot e) dy \to \int_{\re^n}\tau_\Omega(y) (\nabla {\phi}(|x-y|) \cdot e) dy $$
uniformly in $C^0(\partial \Omega)$ using Lemma \ref{lem:Figalli 2.1}. Then we have, 
\begin{align*}
    &\bigg| \int_{\re^n}\tau_\Omega(y) (\nabla {\phi_\varepsilon}(|x-y|) \cdot e) dy - \int_{\re^n}\tau_\Omega(y) (\nabla {\phi_\eta}(|x-y|) \cdot e) dy\bigg| \\
    &= \bigg| \int_{\re^n}\tau_\Omega(y) [(\nabla {\phi_\varepsilon}(|x-y|) - \nabla {\phi_\eta}(|x-y|)) \cdot e] dy\bigg|\\
    &= \bigg| \int_{\re^n}\tau_\Omega(z+x) [(\nabla {\phi_\varepsilon}(|z|) - \nabla {\phi_\eta}(|z|)) \cdot e] dz\bigg|\\
    &= \bigg| \int_{\re^n}\tau_\Omega(z+x) (\phi_\varepsilon'(|z|) - \phi'_\eta(|z|)) \bigg(\frac{z \cdot e}{|z|}\bigg) dz\bigg| \\
    &= \bigg| \int_{B_\eta}\tau_\Omega(z+x) (\phi_\varepsilon'(|z|) - \phi'_\eta(|z|)) \bigg(\frac{z \cdot e}{|z|}\bigg) dz\bigg| \to 0 , \quad \hbox{when}~ \varepsilon, \eta \to 0. 
\end{align*}
In the last line, $B_\eta$ denotes the ball of radius $\eta$ centered at $0$ and we assume $\eta < \varepsilon$. Then we use the fact that $\phi_\varepsilon(t) = \phi_\eta(t)$ when $t >\varepsilon$. Next from the uniform convergence of  we have 
$$\bigg| \int_{B_\eta}\tau_\Omega(z+x) (\phi_\varepsilon'(|z|) - \phi'_\eta(|z|)) \bigg(\frac{z \cdot e}{|z|}\bigg) dz\bigg| \to 0$$
Next we define $A_{\varepsilon} =\{ z : 0< g_\varepsilon(|z|) <1 \} = B_{r(1+\eta/2)} \setminus B_{r(1-\eta/2)}$. Since $g_\varepsilon \to g$, then $|A_\varepsilon| \to 0$. 
Then we have, for very small $\varepsilon, \eta >0$ and $\eta > \varepsilon$,
\begin{align*}
    &\bigg|\int_{\re^n}\tau_\Omega(y) (\nabla {\phi_\varepsilon}(|x-y|) \cdot e)  g_\varepsilon(|x-y|)  \, dy - \int_{\re^n}\tau_\Omega(y) (\nabla {\phi_\eta}(|x-y|) \cdot e)  g_\eta(|x-y|)  \, dy \bigg|\\
    &= \bigg|\int_{\re^n}\tau_\Omega(x+z)(\phi_\varepsilon'(|z|)g_\varepsilon(|z|) - \phi'_\eta(|z|) g_\eta(|z|)) \bigg(\frac{z \cdot e}{|z|}\bigg) dz  \bigg|\\
    &\leq \bigg|\int_{\re^n \setminus (B_\eta \cup A_\eta)}\tau_\Omega(x+z)(\phi_\varepsilon'(|z|)g_\varepsilon(|z|) - \phi'_\eta(|z|) g_\eta(|z|)) \bigg(\frac{z \cdot e}{|z|}\bigg) dz  \bigg| \\
    &\qquad + \bigg|\int_{B_\eta}\tau_\Omega(x+z)(\phi_\varepsilon'(|z|)g_\varepsilon(|z|) - \phi'_\eta(|z|) g_\eta(|z|)) \bigg(\frac{z \cdot e}{|z|}\bigg) dz  \bigg| \\
    &\qquad + \bigg|\int_{A_\eta}\tau_\Omega(x+z)(\phi_\varepsilon'(|z|)g_\varepsilon(|z|) - \phi'_\eta(|z|) g_\eta(|z|)) \bigg(\frac{z \cdot e}{|z|}\bigg) dz  \bigg|
\end{align*}
From the construction of $\phi_\varepsilon$ and $g_\varepsilon$, we notice that $\phi_\varepsilon = \phi_\eta$ in $(\eta, \infty)$ and $g_\varepsilon = g_\eta$ in $[0, r(1-\eta/2)) \cup (r(1+\eta/2), \infty)$. Since $\varepsilon, \eta$ are very small,, $B_\eta \cap A_\eta = \emptyset$. Using that we get,
\begin{align*}
    &\bigg|\int_{\re^n}\tau_\Omega(y) (\nabla {\phi_\varepsilon}(|x-y|) \cdot e)  g_\varepsilon(|x-y|)  \, dy - \int_{\re^n}\tau_\Omega(y) (\nabla {\phi_\eta}(|x-y|) \cdot e)  g_\eta(|x-y|)  \, dy \bigg|\\
    &\leq \bigg|\int_{B_\eta}\tau_\Omega(x+z)(\phi_\varepsilon'(|z|) - \phi'_\eta(|z|)) \bigg(\frac{z \cdot e}{|z|}\bigg) dz  \bigg| \\
    &\qquad + \bigg|\int_{A_\eta}\tau_\Omega(x+z)(g_\varepsilon(|z|) - g_\eta(|z|)) \bigg(\frac{z \cdot e}{|z|}\bigg) dz  \bigg|
\end{align*}
We see that the first term converges to $0$ as $\eta, \varepsilon \to 0$. On the other hand, the second term is bounded by $4|A_\eta|$ which converges to $0$ when $\eta \to 0$. Hence we have 
$$\int_{\re^n}\tau_\Omega(y) (\nabla {\phi_\varepsilon}(|x-y|) \cdot e)  g_\varepsilon(|x-y|)  \, dy \to \int_{\re^n}\tau_\Omega(y) (\nabla {\phi}(|x-y|) \cdot e)  g(|x-y|)  \, dy $$
uniformly for every $x \in \partial \Omega$.

 Next we consider the following term,
 $$ \int_{\re^n}\tau_\Omega(y) {\phi_\varepsilon}(|x-y|) \nabla g_\varepsilon(|x-y|) \cdot e \, dy .$$
  We can write 
\begin{align*}
    & \int_{\re^n}\tau_\Omega(y) \phi_\varepsilon(|x-y|) \nabla  g_\varepsilon(|x-y|) \cdot e \, dy \\
    &= \int_{\Omega^c} \phi_\varepsilon(|x-y|) g'_\varepsilon(|x-y|) \frac{(x-y)}{|x-y|}\cdot e \, dy \\
    &\quad - \int_{\Omega} \phi_\varepsilon(|x-y|) g'_\varepsilon(|x-y|) \frac{(x-y)}{|x-y|}\cdot e \, dy \\
    &= \int^\infty_0 \int_{\Omega^c \cap \partial B_\gamma(x)}\phi_\varepsilon(\gamma) g'_\varepsilon(\gamma) (e_y \cdot e )\, d\sigma(y) d\gamma \\
    &\quad - \int^\infty_0 \int_{\Omega \cap \partial B_\gamma(x)}\phi_\varepsilon(\gamma) g'_\varepsilon(\gamma)( e_y \cdot e )\, d\sigma(y) d\gamma, \quad [\text{where}~ (x-y) = \gamma e_y]
\end{align*}
If we define
$$F(x,e,\lambda) = \int_{\Omega^c \cap \partial B_\lambda(x)} (e \cdot e_y) d \sigma(y) - \int_{\Omega \cap \partial B_\lambda(x)} (e \cdot e_y) d \sigma(y), $$
as before, then we notice that $F(x,e,\lambda)$ is continuous function in $[0, r+\delta/2]$.
Next, using the explicit expression of $g'_\varepsilon$ we get,
$$ \int^\infty_0 \phi_\varepsilon(\gamma) g'_\varepsilon(\gamma) F(x,e, \gamma) \, d\gamma =  \int^\infty_0 \phi_\varepsilon(\gamma) \frac{1}{\varepsilon} \Tau \bigg(\frac{\gamma - r}{\varepsilon} \bigg) F(x,e,\gamma) \, d\gamma$$
Then
\begin{align*}
  &\bigg|\int^\infty_0 \phi_\varepsilon(\gamma) \frac{1}{\varepsilon} \Tau \bigg(\frac{\gamma - r}{\varepsilon} \bigg) F(x,e,\gamma) \, d\gamma + \phi(r) F(x,e,r) \bigg| \\
  &=\bigg|\int^\infty_0 \phi_\varepsilon(\gamma) \frac{1}{\varepsilon} \Tau \bigg(\frac{\gamma - r}{\varepsilon} \bigg) F(x,e,\gamma) \, d\gamma - \int^\infty_0 \frac{1}{\varepsilon} \Tau \bigg(\frac{\gamma - r}{\varepsilon} \bigg) \phi(r) F(x,e,r) d \gamma \bigg|\\
  &= \bigg| \int^\infty_0 \frac{1}{\varepsilon} \Tau \bigg(\frac{\gamma - r}{\varepsilon} \bigg) \big( \phi_\varepsilon(\gamma)  F(x,e,\gamma) -\phi(r) F(x,e,r) \big) \, d\gamma  \bigg| \\
  &\leq  \int_{|\gamma -r|\leq r \varepsilon/2} \bigg| \frac{1}{\varepsilon} \Tau \bigg(\frac{\gamma - r}{\varepsilon} \bigg) \big( \phi_\varepsilon(\gamma)  F(x,e,\gamma) -\phi(r) F(x,e,r) \big) \bigg| \, d\gamma \\
  &\quad + \int_{|\gamma -r|> r \varepsilon/2} \bigg| \frac{1}{\varepsilon} \Tau \bigg(\frac{\gamma - r}{\varepsilon} \bigg) \big( \phi_\varepsilon(\gamma)  F(x,e,\gamma) -\phi(r) F(x,e,r) \big) \bigg| \, d\gamma
\end{align*}
Using the continuity of $F$, as $\varepsilon \to 0$, 
\begin{align*}
   & \int_{\re^n}\tau_\Omega(y) \phi_\varepsilon(|x-y|) \nabla  g_\varepsilon(|x-y|) \cdot e \, dy \\
    &  \xrightarrow[\varepsilon \to 0]{}   -\phi(r) \bigg[ \int_{\Omega^c \cap \partial B_r(x)} (e_y \cdot e )\, d\sigma(x,y) -   \int_{\Omega \cap \partial B_r(x)} (e_y \cdot e )\, d\sigma(y) \bigg] \\
    &\quad = -\phi(r)\int_{\partial B_r(x)} \tau_\Omega(y)  (e_y \cdot e )\, d\sigma(x,y)
\end{align*}
Then $H^\varepsilon_\Omega$ converges in $C^1(\partial \Omega)$ norm. 
\end{proof}
Now we will prove the Theorem \ref{thm:c2beta_bdd_3}. Again we move hyperplane and do the reflection for a vector $e \in \mathbb{S}^{n-1}$. Suppose $\pi_\lambda$ is the critical hyperplane in this case and for simplicity $e = \langle 1, 0, \cdots, 0 \rangle$. 
\begin{proof}[Proof of Theorem \ref{thm:c2beta_bdd_3}]
As before, we consider two different cases. 
\begin{enumerate}[(a)]
    \item Interior touching at the point $x_0$. In this case, if we follow similar computation as in the proof of Theorem $\ref{thm:c2beta_bdd}$, we get 
    $$|(\Omega \setminus R(\Omega))\cap B_r(x_0)|= |(R(\Omega) \setminus \Omega) \cap B_r(x_0)| = 0 $$
    \item Non-transversal intersection at the point $x_0$. Again we follow similar type of computation as in the proof of Theorem \ref{thm:c2beta_bdd}. That gives us $\partial_e H^J_\Omega (x_0) - \partial_e H^J_{R(\Omega)}(x_0) = 0$. Now we know that 
    $$ \partial_e H^J_\Omega(x_0) = \lim_{\varepsilon \to 0} \int_{\re^n} \tau_\Omega \phi'_\varepsilon(|x_0-y|)\frac{(x_0)_1 - y_1}{|x_0 - y|} g_\varepsilon(|x_0-y|) dy - \phi(r)\int_{\partial B_r(x)} \tau_\Omega(y)  \frac{(x_0)_1 - y_1}{|x_0-y|}\, d\sigma(x,y)$$
    Then
    \begin{align*}
        &0=\partial_e H^J_\Omega (x_0) - \partial_e H^J_{R(\Omega)}(x_0) \\
        &= \lim_{\varepsilon \to 0} \int_{\re^n} (\tau_\Omega - \tau_{R(\Omega)}) \phi'_\varepsilon(|x_0-y|)\frac{(x_0)_1 - y_1}{|x_0 - y|} g_\varepsilon(|x_0-y|) dy \\
        &\quad - \phi(r)\int_{\partial B_r(x)} (\tau_\Omega(y)-\tau_{R(\Omega)}(y))  \frac{(x_0)_1 - y_1}{|x_0-y|}\, d\sigma(x,y)\\
        &= \lim_{\varepsilon \to 0} \bigg[ \int_{\Omega \setminus R(\Omega)} \phi'_\varepsilon(|x_0-y|)g_\varepsilon(|x_0-y|) \frac{(x_0)_1-y_1}{|x_0-y|} \, dy \\
        &\qquad - \int_{R(\Omega) \setminus \Omega} \phi'_\varepsilon(|x_0-y|)g_\varepsilon(|x_0-y|) \frac{(x_0)_1-y_1}{|x_0-y|} \, dy \bigg] \\
        &\qquad - \phi(r) \bigg[ \int_{(\Omega \setminus R(\Omega)) \cap \partial B_r(x)} \frac{(x_0)_1-y_1}{|x_0-y|} \, dy - \int_{(R(\Omega) \setminus \Omega)\cap \partial B_r(x)}\frac{(x_0)_1-y_1}{|x_0-y|} \, dy \bigg]
    \end{align*}
    Next we observe that $(x_0)_1 = \lambda$ and if $y\in \Omega \setminus R(\Omega)$ then $y_1<\lambda$. On the other hand, if $y \in R(\Omega) \setminus \Omega$ then $y_1> \lambda$. Hence
    \begin{align*}
      &\lim_{\varepsilon \to 0} \bigg[ \int_{\Omega \setminus R(\Omega)} \phi'_\varepsilon(|x_0-y|)g_\varepsilon(|x_0-y|) \frac{|\lambda- y_1|}{|x_0-y|} \, dy + \int_{R(\Omega) \setminus \Omega} \phi'_\varepsilon(|x_0-y|)g_\varepsilon(|x_0-y|) \frac{|\lambda -y_1|}{|x_0-y|} \, dy \bigg] \\
        &\quad - \phi(r) \bigg[ \int_{(\Omega \setminus R(\Omega)) \cap \partial B_r(x)} \frac{|\lambda -y_1|}{|x_0-y|} \, dy + \int_{(R(\Omega) \setminus \Omega)\cap \partial B_r(x)}\frac{|\lambda -y_1|}{|x_0-y|} \, dy \bigg] = 0 
    \end{align*}
    We know $g_\varepsilon \geq 0$ in $B_r$, $\phi(r)>0, \phi'_\varepsilon \leq 0$. 
    From this we can say that 
    $$|(\Omega \setminus R(\Omega))\cap B_r(x_0)|= |(R(\Omega) \setminus \Omega) \cap B_r(x_0)| = 0 $$
    and
    $$\mathcal{H}^{n-1} ((\Omega \setminus R(\Omega)) \cap \partial B_r(x)) = \mathcal{H}^{n-1} ((R(\Omega) \setminus \Omega)\cap \partial B_r(x)) = 0$$
 \end{enumerate}
 Hence in both cases, we are getting that,
 $$|(\Omega \setminus R(\Omega))\cap B_r(x_0)|= |(R(\Omega) \setminus \Omega) \cap B_r(x_0)| = 0 $$
 Now using the arguments as in the proof of Theorem \ref{thm:c2beta_bdd_2}, we get the desired result.
\end{proof}

\section{Constant Curvature problem with constant kernel}
For this case, the prototypical kernel is 
$$J(x) = \chi_{B_r}(x), $$
where $B_r$ is the ball of radius $r$ centered at $0$.
Then we see that the nonlocal curvature at a point $x$ is
\begin{align*}
    H^J_\Omega (x) &= \int_{\R^n} (\chi_{\Omega^c}(y) - \chi_{\Omega}(y))J(x-y) dy \\
    &= \int_{\R^n} (\chi_{\Omega^c}(y) - \chi_{\Omega}(y))\chi_{B_r(x)}(y) dy \\
    &= |\Omega^c \cap B_r(x)| - |\Omega \cap B_r(x)| \\
    &= |B_r| - 2 |\Omega \cap B_r(x)|
\end{align*}
As before, we assume $e=\langle 1, 0, \cdots, 0 \rangle$ with $\pi_\lambda$ to be the critical hyperplane for the direction $e$. $R_\lambda$ is the reflection operator with respect to $\pi_\lambda$, which will be denoted as operator $R$ for simplicity. Here again, we define,
$${\pi_\lambda}_+ = \{x: x_1 > \lambda \} \quad \text{and}~ \quad  {\pi_\lambda}_- = \{x: x_1 < \lambda \}$$
We also define
$$
\begin{cases}
\Omega_+ = \Omega \cap {\pi_\lambda}_+, \quad \text{and}~\quad \Omega_- = \Omega \cap {\pi_\lambda}_- \\
\partial \Omega_+ = \partial \Omega \cap {\pi_\lambda}_+, \quad \text{and}~\quad \partial \Omega_- = \partial \Omega \cap {\pi_\lambda}_-
\end{cases}
$$

\begin{proof}[Proof of Theorem \ref{thm:thm:c2beta_bdd_4}]
As before, we fix the direction $e$ and apply the Alexandrov's moving plane method. Next we consider two different cases.

 Case (a): Interior touching at the point $x_0$. In this case, if we follow similar computation as in the proof of Theorem $\ref{thm:c2beta_bdd}$, we get,
 \begin{align}\label{eq:interior constant kernel}
        0 &= H^J_\Omega(x_0) - H^J_\Omega(R(x_0)) \\ \nonumber
        &= H^J_\Omega(x_0) - H^J_{R(\Omega)}(x_0)\\ \nonumber
        &= \int_{\re^n} \bigg( \tau_{\Omega}(y) - \tau_{R(\Omega)}(y)\bigg)J(x_0 - y) dy\\ \nonumber
        &= \int_{\Omega \setminus R(\Omega)} J(x_0-y) dy - \int_{R(\Omega) \setminus \Omega} J(x_0 -y) dy \\ \nonumber
        &= \int_{\Omega \setminus R(\Omega)} \big(J(x_0-y)-J(x_0 - R(y)) dy \\ \nonumber
        &= |(\Omega \setminus R(\Omega))\cap (B_r(x_0) \setminus B_r(R(x_0))|
    \end{align}
 $\Omega$ is a connected set with $C^1$ boundary and hence for any $x_1, x_2 \in \partial \Omega$, if $x_1 \neq x_2$, we have
 $$|\Omega \cap (B_r(x_1) \Delta B_r(x_2))| >0. $$ 
 Hence from \eqref{eq:interior constant kernel}, we have $\Omega = R(\Omega)$ inside the set $B_r(x_0) \setminus B_r(R(x_0))$ or the boundaries of $\Omega$ and $R(\Omega)$ coincide inside the set $B_r(x_0) \setminus B_r(R(x_0))$. In fact, boundaries of $\Omega$ and $R(\Omega)$ coincide inside the set $B_r(x_0) \setminus \overline{B_r(R(x_0))}$. Let $y \in B_r(x_0) \setminus \overline{B_r(R(x_0))}$ be such a point on $\partial \Omega_-$. Then $y$ is a point of interior touching.
Since $B_r(x_0) \setminus \overline{B_r(R(x_0))}$ is open, there exists an open neighborhood $V \subseteq B_r(x_0) \setminus \overline{B_r(R(x_0))}$ such that on the set, $V \cap \partial \Omega_-$, $\Omega$ and $R(\Omega)$ coincide and $y \in V \cap \partial \Omega_-$.  Since $y \in B_r(x_0) \setminus \overline{B_r(R(x_0))}$,  it is evident that $x_0 \in B_r(y) \setminus \overline{B_r(R(y))}$ and similarly there exits an open neighborhood $U \subseteq B_r(y) \setminus \overline{B_r(R(y))}$ such that $\Omega = R(\Omega)$ inside $U$ and $x_0 \in U \cap \partial \Omega_-$.
 Let $\partial^i \Omega$ be the set of all interior points. Then, it is an open set subset of $\partial \Omega_-$. Next consider a point $x_{nc} \in \partial \Omega_- \setminus \partial^i \Omega$. That point is not a point of contact, i.e neither an interior point nor a point of non-transversal intersection. Next we consider the set $\partial \Omega_- \cap (B_r(x_{nc}) \setminus \overline{B_r(R(x_{nc}))})$. Every point in that set would be a noncontact point. Otherwise, if there is one contact point, say $z_c$, that would be an interior point and $x_{nc} \in \partial \Omega_- \cap (B_r(z_c) \setminus \overline{B_R(R(z_c))})$, which makes $x_{nc}$ to be an interior point. That is a contradiction. Next, using the similar argument as before, we see that the set of non-contact point of $\partial \Omega_-$ is also an open subset of $\partial \Omega_-$. But that is not possible since $\partial \Omega_-$ is a connected set. That implies $\partial^i \Omega = \partial \Omega_-$ and hence all points on $\partial \Omega_-$ are interior touching points. 
 
 Case (b): Non-transversal intersection at the point $x_0$. Using Lemma \ref{lem:Figalli 2.1 with jump}, we see that 
$$\partial_e H^J_\Omega(x_0) = - \int_{\partial B_r(x)} \tau_\Omega(y)  \frac{(x_0)_1 - y_1}{|x_0-y|}\, d\sigma(x,y) $$
Similar analysis as in the proof of Theorem \ref{thm:c2beta_bdd_3}, provides,
\begin{align}\label{eq:constant_non_transverse}
    \mathcal{H}^{n-1} ((\Omega \setminus R(\Omega)) \cap \partial B_r(x_0)) = \mathcal{H}^{n-1} ((R(\Omega) \setminus \Omega)\cap \partial B_r(x_0)) = 0.
\end{align}
Since $\Omega$ has $C^1$-boundary, for every point $x_0 \in \partial \Omega$, 
$$\mathcal{H}^{n-1} (\Omega \cap \partial B_r(x_0)) >0. $$
In particular, 
\begin{align*}
    \mathcal{H}^{n-1} (\Omega_- \cap \partial B_r(x_0)) >0.
\end{align*}
Then \eqref{eq:constant_non_transverse} implies $\Omega = R(\Omega)$ on $\partial B_r(x_0) \cap \Omega_-$. From the given property of the boundary, there will be a point $z \in \partial \Omega_- \cap \partial B_r(x_0)$. But then that point would be an interior touching point. Next we do the analysis as in Case (a) to conclude all the points on $\partial \Omega_-$ are interior touching points.

Then in both cases, we proved that $\Omega$ is symmetric with respect to the direction $e$. Since $e$ is arbitrary, $\Omega$ is a ball.
\end{proof}

\section{Conclusions and future directions}

The results of this paper showcases the extent to which previous methods (such as the Alexandrov's moving plane, establishing differentiability of the nonlocal curvature) can handle problems of constant nonlocal mean curvature, while allowing kernels which are only integrable. In fact, the kernel's integrability becomes an advantage as it enables our analysis to eliminate some of the smoothness restrictions on the boundary, which were present in previous works; indeed, we are able to consider boundaries with $C^1$ regularity whenever the kernel is constant. So far it remains unclear if continuous boundaries could be handled with a modified version of this approach. For this to work, one would need a generalized normal vector, for which the authors are investigating some possible candidates.

In conclusion, the main problem arising for future studies is considering sets with little regularity of the boundary. Note that the nonlocal curvature function is not even $C^1$ in the absence of the same regularity for the boundary. We are also interested in the nonconstant prescribed curvature problem. More precisely, for the prescribed curvature problem, we aim to establish wellposedness and regularity results for a function $u$, whose graph $\partial E$ satisfies the equation  $H^J_{\partial E} (x) = f(x)$, when $f$ is nonconstant. The difficulty of this problem begins with identifying appropriate boundary conditions, expected to be imposed on nonzero measure sets. Regularity results for the graph of $u$ would be dependent on the assumptions imposed on the prescribed nonlocal curvature $f$. To the authors' knowledge this problem is open when the kernel of nonlocal interaction is integrable or not.

\section{Appendix}
\begin{proof}[Proof of Lemma \ref{lem:Figalli 2.1}]
Proof of this Lemma is very similar as in \cite{Figalli}. Here we mention some important steps of the computation using the growth conditions of our kernel and regularity of the boundary. Consider a point $x = (x',x_n) \in \partial \Omega$. As the boundary is $C^{1, \beta}$, there exists a small neighborhood of $x$ where the boundary can be written as the graph of some function $f : \re^{n-1} \to \re$. Without loss of generality, we can assume that $x=0$, $\nabla f(0) = 0 $. Then we define
$$B'_\rho = \{ x' : |x'|< \rho \}, L_\rho = (-\rho, \rho) \times B'_\rho, $$
and
$$\partial \Omega \cap L_\rho = \{(x',f(x'): x' \in B'_\rho \} = (Id \times f)(B'_\rho). $$
 For the $C^1$ convergence, first we define
$$\psi_\varepsilon(t) = - \frac{1}{t^n} \int^\infty_t \phi_\varepsilon(\tau) \tau^{n-1} d \tau $$
Then $$ \dvie (x \psi_\varepsilon) = \phi_\varepsilon(|x|). $$
Using divergence theorem,
$$H^\varepsilon_\Omega (x) = -2\int_{\partial \Omega} \psi_\varepsilon(|x-y|)(x-y) \cdot \nu_y d \sigma $$
Taking derivative in $x$-variable and for some $e \in T_x(\partial \Omega) \cap \mathbb{S}^{n-1}$, we have
$$\nabla H^\varepsilon_\Omega(x) \cdot e = 2 \int_{\partial \Omega}\bigg(\psi_\varepsilon(|x-y|)\nu_y \cdot e + \frac{\psi'_\varepsilon(|x-y|)}{|x-y|}((x-y)\cdot \nu_y)((x-y) \cdot e) \bigg) d \sigma $$

Next we want to evaluate the following quantity, $\norm{\nabla H^\varepsilon_\Omega(x) \cdot e - \nabla H^\eta_\Omega(x) \cdot e}_{C^0}$ when $\varepsilon, \eta \to 0$. As mentioned before, we can assume that $x=0, \nabla f(0) = 0$. Then we have,
$$\nabla H^\varepsilon_\Omega(0) \cdot e = 2 \int_{\partial \Omega}\bigg(\psi_\varepsilon(|y|)\nu_y \cdot e + \frac{\psi'_\varepsilon(|y|)}{|y|}(y\cdot \nu_y)(y \cdot e) \bigg) d \sigma $$
Next we choose a $0<\gamma <\rho$ and define the set $G_\gamma = (Id \times f)(B'_\gamma)$. By the construction of $\psi_\varepsilon, \psi'_\varepsilon$, they converge uniformly in $[t, \infty)$ for any $t>0$. Then we have
\begin{align*}
   & \bigg| \int_{\partial \Omega \setminus G_\gamma }\bigg(\psi_\varepsilon(|y|)\nu_y \cdot e + \frac{\psi'_\varepsilon(|y|)}{|y|}(y\cdot \nu_y)(y \cdot e) \bigg) d \sigma \\
   & \quad - \int_{\partial \Omega \setminus G_\gamma }\bigg(\psi_\eta(|y|)\nu_y \cdot e + \frac{\psi'_\eta(|y|)}{|y|}(y\cdot \nu_y)(y \cdot e) \bigg) d \sigma \bigg| \to 0
\end{align*}
as $\varepsilon, \eta \to 0$. Next inside the set $G_\gamma$, we can write $\nu_y = \frac{(-\nabla f(y'), 1)}{\sqrt{1 + |\nabla f(y')|^2}}$ where $y =(y', f(y'))$ in $G_\gamma$. One can also notice that, as $e \in T_0(\partial \Omega)$ then $e$ does not have any component in $x_n$ direction. Then we write,
\begin{align}\label{eq:cauchy}
  & 2 \int_{G_\gamma}\bigg(\psi_\varepsilon(|y|)\nu_y \cdot e + \frac{\psi'_\varepsilon(|y|)}{|y|}(y\cdot \nu_y)(y \cdot e) \bigg) d \sigma \\ \nonumber
   &= 2 \int_{B'_\gamma} \bigg(-\psi_\varepsilon(\sqrt{|y'|^2+ f^2}) \frac{\nabla f \cdot e}{\sqrt{1 + |\nabla f(y')|^2}} \\ \nonumber
   &\qquad + \frac{\psi'_\varepsilon(\sqrt{|y'|^2+ f^2})}{\sqrt{|y'|^2+ f^2}} \frac{f - \nabla f \cdot y'}{\sqrt{1 + |\nabla f(y')|^2}} (y' \cdot e)\bigg) \sqrt{1 + |\nabla f(y')|^2} \, dy' \\ \nonumber
   &= 2 \int_{B'_\gamma} \bigg(-\psi_\varepsilon(\sqrt{|y'|^2+ f^2}) (\nabla f \cdot e)  + \frac{\psi'_\varepsilon(\sqrt{|y'|^2+ f^2})}{\sqrt{|y'|^2+ f^2}} (f - \nabla f \cdot y') (y' \cdot e)\bigg) \, dy'
\end{align}
By changing $y' \mapsto -y'$ and adding the two expressions, we get the following, same as in \cite{Figalli},
\begin{align}\label{eq:cauchy2}
    & -\int_{B'_\gamma} \psi_\varepsilon (\sqrt{|y'|^2+ f^2}) (\nabla f(y') \cdot e + \nabla f(-y') \cdot e) \, dy' \\ \nonumber
    & + \int_{B'_\gamma} \big( \psi_\varepsilon (\sqrt{|y'|^2+ f(y')^2})- \psi_\varepsilon (\sqrt{|y'|^2+ f(-y')^2}) \big) \nabla f(y') \cdot e  \, dy' \\ \nonumber
    &+ \int_{B'_\gamma} \frac{\psi'_\varepsilon(\sqrt{|y'|^2+ f(y')^2})}{\sqrt{|y'|^2+ f(y')^2}} \big([f(y')-f(-y')](y' \cdot e)- [\nabla f(y') \cdot y' + \nabla f(-y') \cdot y'](y'\cdot e) \big) \, dy' \\ \nonumber
    &+ \int_{B'_\gamma} \bigg( \frac{\psi'_\varepsilon(\sqrt{|y'|^2+ f(y')^2})}{\sqrt{|y'|^2+ f(y')^2}} - \frac{\psi'_\varepsilon(\sqrt{|y'|^2+ f(-y')^2})}{\sqrt{|y'|^2+ f(-y')^2}} \bigg)[f(-y') + \nabla f(-y') \cdot y'](y' \cdot e) \, dy'
\end{align}
Using the regularity of $f$, we have the following estimates,
\begin{equation*}
    |\nabla f(y') \cdot e + \nabla f(-y') \cdot e| \leq C|y'|^\beta, \quad  |\nabla f(-y')| \leq C|y'|^\beta, \quad |f(y')|\leq C|y'|^{1+\beta},
    \end{equation*}
which yield
\begin{equation*}
    |f(y') - f(-y')| \leq C|y'|^{1+\beta}, \quad |\nabla f(y') \cdot y' + \nabla f(-y') \cdot y'|\leq C|y'|^{1+\beta}.
    \end{equation*}
    Moreover,
   \begin{align*} 
    \bigg|\psi_\varepsilon (\sqrt{|y'|^2+ f(y')^2})- \psi_\varepsilon (\sqrt{|y'|^2+ f(-y')^2})&\bigg| \leq C \frac{|f(y')^2 - f(-y')^2|}{\max \{|y'|^{n-\alpha +2}, |y'|^{n+\alpha_1 +2}\}} \\
      \qquad &\leq \frac{C}{\max \{|y'|^{n-\alpha -\beta}, |y'|^{n+\alpha_1 -\beta}\}} 
      \end{align*}
      and also
   \begin{align*} 
    \bigg| \frac{\psi'_\varepsilon(\sqrt{|y'|^2+ f(y')^2})}{\sqrt{|y'|^2+ f(y')^2}} - \frac{\psi'_\varepsilon(\sqrt{|y'|^2+ f(-y')^2})}{\sqrt{|y'|^2+ f(-y')^2}}  \bigg| 
    &\leq C \frac{|f(y')^2 - f(-y')^2|}{\max \{|y'|^{n-\alpha +4}, |y'|^{n+\alpha_1 +4}\}} \\
    \qquad &\leq \frac{C}{\max \{|y'|^{n-\alpha -\beta +2}, |y'|^{n+\alpha_1 -\beta + 2}\}}.
\end{align*}
As $\gamma$ is very small, we can assume that 
\begin{align*}
    \bigg|\psi_\varepsilon (\sqrt{|y'|^2+ f(y')^2})- \psi_\varepsilon (\sqrt{|y'|^2+ f(-y')^2})\bigg|& \leq \frac{C}{ |y'|^{n-\alpha -\beta}} \\
    \bigg| \frac{\psi'_\varepsilon(\sqrt{|y'|^2+ f(y')^2})}{\sqrt{|y'|^2+ f(y')^2}} - \frac{\psi'_\varepsilon(\sqrt{|y'|^2+ f(-y')^2})}{\sqrt{|y'|^2+ f(-y')^2}}  \bigg| &\leq \frac{C}{ |y'|^{n + 2-\alpha -\beta}}.
\end{align*}
Thus if we estimate each term in \eqref{eq:cauchy2}, we get 
$$\bigg|\int_{B'_\gamma} \psi_\varepsilon (\sqrt{|y'|^2+ f^2}) (\nabla f(y') \cdot e + \nabla f(-y') \cdot e) \, dy' \bigg| \leq C\int^\gamma_0 \frac{1}{\kappa^{n-\alpha}} \kappa^\beta \kappa^{n-2} \, d \kappa = C\gamma^{\alpha + \beta -1}$$
and
\begin{align*}
    \bigg|\int_{B'_\gamma} \big( \psi_\varepsilon (\sqrt{|y'|^2+ f(y')^2})&- \psi_\varepsilon (\sqrt{|y'|^2+ f(-y')^2}) \big) \nabla f(y') \cdot e  \, dy'  \bigg|\\
    &\leq C \int^\gamma_0 \frac{1}{\kappa^{n-\alpha -\beta }}\kappa^{\beta}\kappa^{n-2} \, d \kappa \\
    &\leq C \gamma^{\alpha+2\beta -1}.
\end{align*}
Similarly the third and fourth term will also be bounded by $C\gamma^{\alpha + \beta -1}$. Hence 
\begin{align*}
   & \limsup_{\varepsilon, \eta \to 0} \bigg| \int_{ G_\gamma }\bigg(\psi_\varepsilon(|y|)\nu_y \cdot e + \frac{\psi'_\varepsilon(|y|)}{|y|}(y\cdot \nu_y)(y \cdot e) \bigg) d \sigma \\
   & \quad - \int_{ G_\gamma }\bigg(\psi_\eta(|y|)\nu_y \cdot e + \frac{\psi'_\eta(|y|)}{|y|}(y\cdot \nu_y)(y \cdot e) \bigg) d \sigma \bigg| \leq C \gamma^{\alpha + \beta -1}.
\end{align*}
By taking the limit $\gamma \to 0$ we obtain the desired convergence. 
\end{proof}

\end{document}